\title{All finite groups are involved in the Mapping Class Group}
\author{Gregor Masbaum}
\address{Institut de Math{\'e}matiques de Jussieu (UMR 7586 du CNRS),
%%Equipe Topologie et G{\'e}om{\'e}trie Alg{\'e}briques,
Case 247, 4 pl. Jussieu,
75252 Paris Cedex 5, France}
\email{masbaum@math.jussieu.fr}
\urladdr{}
\author{Alan W. Reid}
\address{Department of Mathematics,
University of Texas, 
Austin, TX 78712, USA.}
\email{areid@math.utexas.edu}
\newtheorem{theorem}{Theorem}[section]    
\newtheorem{lemma}[theorem]{Lemma}         
\newtheorem{corollary}[theorem]{Corollary} 
\newtheorem{remark}[theorem]{Remark}       
\newtheorem{definition}[theorem]{Definition}
\newtheorem{proposition}[theorem]{Proposition}
\newtheorem{notation}[theorem]{Notation} 
\def\tr{\mbox{\rm{tr}}}
\def\PSL{\mbox{\rm{PSL}}}
\def\P{\mbox{\rm{P}}}
\def\SL{\mbox{\rm{SL}}}
\def\SO{\mbox{\rm{SO}}}
\def\SU{\mbox{\rm{SU}}}
\def\Sp{\mbox{\rm{Sp}}}
\def\PSU{\mbox{\rm{PSU}}}
\def\rk{\mbox{\rm{rk}}}
\def\GL{\mbox{\rm{GL}}}
\def\Gal{\mbox{\rm{Gal}}}
\def\Ad{\mbox{\rm{Ad}}}
\def\det{\mbox{\rm{det}}}
\def\Out{\mbox{\rm{Out}}}
\def\dim{\mbox{\rm{dim}}}
\def\qed{ $\sqcup\!\!\!\!\sqcap$}
\newcommand{\BZ}{{\bf{Z}}}
\newcommand{\BQ}{{\bf{Q}}}
\newcommand{\BC}{{\bf{C}}}
\newcommand{\Si}{{\Sigma}}
\newcommand{\BF}{{\bf{F}}}
\newcommand{\Gad}{{\mathcal G}_{ad}}
\def\Trd{\mbox{\rm{Trd}}}
\def\Nrd{\mbox{\rm{Nrd}}}
\def\mapright#1{\smash{\mathop{\longrightarrow}\limits^{#1}}}
\begin{document} 

\begin{abstract}    % type your abstract below
Let $\Gamma_g$
denote the orientation-preserving Mapping Class Group of the genus $g\geq 1$
closed orientable surface. In this paper we show that
for fixed $g$, every finite group occurs as a quotient of a finite
index subgroup of $\Gamma_g$.
\end{abstract}

\maketitle

%%%%%%%%%%%%%%%%%%%%   Start of main body of article

\section{Introduction} \label{sec1}
Throughout this paper, $\Gamma_g$ will
denote the orientation-preserving Mapping Class Group of the genus $g$
closed orientable surface. 

A group $H$ is 
{\em involved}
in a group $G$ if there exists a finite
index subgroup $K<G$ and an epimorphism from $K$ onto $H$. 
The question as to whether every finite group
is involved in $\Gamma_g$ was raised by U. Hamenst{\"a}dt in her talk at
the 2009 Georgia Topology Conference. 
The main
result of this note is the following.

\begin{theorem}
\label{main}
For all $g\geq 1$, every finite group is involved in $\Gamma_g$.\end{theorem}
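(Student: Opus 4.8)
The plan is to handle $g=1$ directly and to treat $g\ge2$ through the quantum representations of the mapping class group. For $g=1$ we have $\Gamma_1\cong\SL(2,\BZ)$, which contains a nonabelian free subgroup of finite index (it is virtually free; for instance its commutator subgroup is free of rank $2$). A nonabelian free group $F$ has every finite group involved: by the Nielsen--Schreier formula an index-$n$ subgroup of $F_2$ is free of rank $n+1$, so $F$ has finite-index subgroups free of arbitrarily large rank, and a free group of rank $r$ surjects onto every $r$-generated group. Hence every finite group is involved in $\Gamma_1$.

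For $g\ge2$ I would first record a soft reduction: \emph{if every finite group embeds into some finite quotient of $\Gamma_g$, then every finite group is involved in $\Gamma_g$}. Indeed, if $q\colon\Gamma_g\to L$ is a surjection with $L$ finite and $H\le L$, then $q^{-1}(H)$ has finite index in $\Gamma_g$ and maps onto $H$. Now every finite group $H$ embeds in $S_{|H|}$, hence in $A_{|H|+2}$, hence --- via the standard faithful $(m-1)$-dimensional representation of $A_m$ over $\BF_\ell$ with $\ell$ prime to $m$, whose image (after harmless twists) lies in $\SL$, $\Sp$, or $\SU$ --- into $\PSL(m-1,\BF_\ell)$ and its symplectic and unitary analogues. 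So it suffices to exhibit, for arbitrarily large $m$ and some prime $\ell$, a finite quotient of $\Gamma_g$ containing a simple classical group of Lie type of rank $\sim m$.

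To construct such quotients I would use, for an odd prime $p\ge5$, the $\SO(3)$-TQFT projective representation $\rho_p\colon\Gamma_g\to\PGL(V_p(\Sigma_g))$, whose dimension is given by the Verlinde formula and tends to $\infty$ with $p$ (for fixed $g\ge2$). It lifts to a linear representation of a finite central extension of $\Gamma_g$, is defined over $\BZ[\zeta_p]$ (integrality of the TQFT), and preserves a Hermitian form; so after restriction of scalars its image lies in the $S$-integral points of a classical semisimple $\BQ$-group $\mathcal G$ whose rank grows like $\dim V_p(\Sigma_g)$. The essential input --- the step I expect to be the main obstacle --- is the theorem of Larsen and Wang that for $g\ge2$ this image is Zariski dense in $\mathcal G$. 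Granting it, Zariski density together with integrality and strong approximation (Matthews--Vaserstein--Weisfeiler, Nori, Weisfeiler) show that for all but finitely many primes $\mathfrak p$ of $\BZ[\zeta_p]$ the reduction of $\rho_p$ modulo $\mathfrak p$ surjects onto $\mathcal G(\kappa(\mathfrak p))$, a finite classical group of Lie type (an $\SL_n$ or a unitary $\SU_n$) of rank $\sim\dim V_p(\Sigma_g)$. After bookkeeping with the finite central extension and with the passage between the simply connected and adjoint forms of $\mathcal G$ --- which change the relevant quotients only up to bounded index --- this produces a genuine finite quotient of $\Gamma_g$ containing a simple classical group of Lie type of rank as large as we please, and the reduction above then finishes the proof.

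The part I see as genuinely hard is the Zariski density of the quantum representations; mere infiniteness of the image of $\rho_p$ --- which is older and easier --- would be useless, since a group can be infinite and linear and still have only cyclic finite quotients (for instance $\BZ$). Everything else --- the Verlinde dimension formula, integrality of the $\SO(3)$ TQFT, strong approximation for thin or arithmetic linear groups, and the elementary embeddings of finite groups into large classical groups over finite fields --- I regard as routine.
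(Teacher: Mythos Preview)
Your proposal is correct and follows essentially the same route as the paper: Larsen--Wang density of the $\SO(3)$-TQFT image inside the special unitary group, strong approximation to produce surjections of (a central extension of) $\Gamma_g$ onto $\SL(N,q)$ and hence of $\Gamma_g$ onto $\PSL(N,q)$ for arbitrarily large $N$, and then the elementary observation that any finite group embeds in $\PSL(N,q)$ for $N$ large.

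Two places where the paper is more careful than your sketch, and which you should not dismiss as pure bookkeeping. First, the paper handles $g=2$ separately (via the known largeness of $\Gamma_2$) and runs the TQFT argument only for $g\ge 3$: the reason is that one needs $\widetilde\Gamma_g$ to be perfect, both to force $\det\circ\rho_p\equiv 1$ (so the image lands in $\SL$ rather than $\GL$) and, more subtly, to descend the image from the $k$-points to the $\ell$-points of $\mathcal G$ where $\ell$ is the adjoint trace field. Second, the version of strong approximation invoked (Weisfeiler) requires the adjoint trace field of the Zariski-dense subgroup to equal the field of definition; the paper spends a proposition (using Vinberg and Borel--Tits, together with perfectness of $\Delta_g$) showing that $\mathcal G$ can be defined over this trace field $\ell$ with $\Delta_g\subset\mathcal G(\ell)$, so that the hypothesis is met. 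Your phrase ``Zariski density together with integrality and strong approximation'' hides exactly this step.
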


Some comments are in order.  When $g=1$, $\Gamma_1 \cong \SL(2,{\bf
Z})$ and in this case the result follows since $\SL(2,{\bf Z})$
contains free subgroups of finite index (of arbitrarily large rank).
For the case of $g=2$, it is known that
$\Gamma_2$ is large  \cite{Ko}; that is to say, $\Gamma_2$ contains a finite
index subgroup that surjects a free non-abelian group, and again the
result follows.  Thus, it suffices to deal with the case when $g\geq
3$. 

Although $\Gamma_g$ is well-known to be residually finite \cite{Gro},
and therefore has a rich supply of finite quotients, 
apart from those finite quotients obtained from 

$$\Gamma_g \rightarrow \Sp(2g,{\bf Z}) \rightarrow \Sp(2g,{\bf Z}/N{\bf Z})$$

\noindent very little seems known explicitly 
about what  finite groups can arise as
quotients of $\Gamma_g$ (or of subgroups of finite index). 
Some constructions of finite quotients of finite index
subgroups of $\Gamma_g$ do appear in the literature; for example, in
\cite{DT}, \cite{FP} and \cite{Lo}. In particular, the constructions
in \cite{Lo} using Prym representations associated to finite abelian
overs of surfaces can be used to construct finite quotients that 
are
similar in spirit to what is done here.
Further information about the structure of finite index subgroups of
$\Gamma_g$ is contained in \cite{BGP} where the minimal index of a 
proper subgroup of $\Gamma_g$ is computed.

Theorem \ref{main} will follow (see \S \ref{sec4}) from our next result which gives 
many new finite simple groups of Lie type as quotients of $\Gamma_g$.
Throughout the paper, ${\bf F}_q$ will denote a finite field of order $q$, and
$\SL(N,q)$ (resp. $\PSL(N,q))$ 
will denote the finite group $\SL(N, {\bf F}_q)$ (resp. $\PSL(N, {\bf F}_q)$).

\begin{theorem}
\label{main2}
For each $g\geq 3$, there
exist
infinitely many $N$ such that
for each such $N$, there 
exist
infinitely
many primes $q$ such that $\Gamma_g$ surjects $\PSL(N,q)$.\end{theorem}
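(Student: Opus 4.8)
\medskip

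\noindent\textbf{Proof proposal for Theorem~\ref{main2}.}
The plan is to exhibit, for infinitely many $N$, a projective representation of $\Gamma_g$ over a cyclotomic field whose image is Zariski dense in an adjoint form of $\PGL_N$, and then to read off the surjections onto $\PSL(N,q)$ by reducing modulo well-chosen primes and invoking strong approximation. I would use the $\SO(3)$ quantum (TQFT) representations: for each prime $p\geq 5$ there is a projective representation $\rho_p\colon\Gamma_g\to\PGL(V_p(\Sigma_g))$ on a complex vector space $V_p(\Sigma_g)$ whose dimension $N_p=N_{p,g}$ is given by the Verlinde formula and tends to infinity with $p$, so the values $N_p$ form an infinite set; these are the admissible $N$'s. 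Note $\rho_p$ is a genuine homomorphism into the projective group, so its reductions below will be honest homomorphisms of $\Gamma_g$.

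Fix such a $p$ and put $N=N_p$. I would record three structural facts about $\rho_p$. First, by the integral TQFT of Gilmer--Masbaum, after a suitable change of basis $\rho_p$ preserves a natural $\mathcal{O}$-lattice, $\mathcal{O}={\bf Z}[\zeta_p]$, equipped with a nondegenerate Hermitian form $h$ relative to the CM extension ${\bf Q}(\zeta_p)/{\bf Q}(\zeta_p)^{+}$; thus $\rho_p$ becomes a homomorphism $\Gamma_g\to\Gad({\bf Q}(\zeta_p)^{+})$, where $\Gad$ is the adjoint group of the unitary group of $h$, a ${\bf Q}(\zeta_p)^{+}$-form of $\PGL_N$. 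Second, $V_p(\Sigma_g)$ is an irreducible $\Gamma_g$-module (Roberts). Third, and crucially, $\rho_p(\Gamma_g)$ is dense in the compact real form of $\Gad$ at the real place where $h$ is positive definite (Larsen--Wang); since a maximal compact subgroup is Zariski dense in the ambient complex group, together with irreducibility this forces the Zariski closure of $\rho_p(\Gamma_g)$ in $\Gad$ over ${\bf Q}(\zeta_p)^{+}$ to be all of $\Gad$.

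Now I would choose the primes $q$. Take a rational prime $q\equiv 1\pmod p$ (infinitely many, by Dirichlet) lying outside a certain finite exceptional set named below. Then $q$ splits completely in ${\bf Q}(\zeta_p)$, so every prime $\mathcal{P}$ of ${\bf Q}(\zeta_p)^{+}$ above $q$ has residue field ${\bf F}_q$ and splits in ${\bf Q}(\zeta_p)$; hence $h$ becomes split at $\mathcal{P}$ and $\Gad$ has good reduction there to the adjoint group $\PGL_N$ over ${\bf F}_q$. Reducing $\rho_p$ modulo $\mathcal{P}$ yields a homomorphism $\overline{\rho}_{p,\mathcal{P}}\colon\Gamma_g\to\PGL(N,q)$. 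By strong approximation for Zariski-dense subgroups of semisimple groups (Nori; Weisfeiler), applied via the simply connected cover of $\Gad$, for all but finitely many $\mathcal{P}$ the image of $\overline{\rho}_{p,\mathcal{P}}$ contains the image of $\SL(N,{\bf F}_q)$ in $\PGL(N,q)$, namely $\PSL(N,q)$ --- this is the finite exceptional set. On the other hand $\Gamma_g$ is perfect for $g\geq 3$, hence admits no nontrivial homomorphism to the abelian group $\PGL(N,q)/\PSL(N,q)$, so the image of $\overline{\rho}_{p,\mathcal{P}}$ is contained in $\PSL(N,q)$. Therefore $\overline{\rho}_{p,\mathcal{P}}$ surjects $\PSL(N,q)$. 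Letting $p$ (hence $N$) range over the admissible primes, and $q$ over the admissible primes attached to each $N$, proves the theorem.

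The main obstacle is the third structural fact --- that the quantum representations of $\Gamma_g$ have image as large as possible. The topological density is the theorem of Larsen--Wang, and upgrading it to Zariski density of $\rho_p(\Gamma_g)$ over the number field ${\bf Q}(\zeta_p)^{+}$, together with the identification of the Zariski closure with the full adjoint unitary group $\Gad$ rather than a proper reductive subgroup (which would only yield $\SO$- or $\Sp$-type reductions, not $\PSL$), is where some care is needed; the irreducibility of $V_p(\Sigma_g)$ and the definiteness of $h$ at the chosen place carry this. Everything else is bookkeeping: the Verlinde computation that $N_p\to\infty$, the Gilmer--Masbaum integral structure to pin down $\Gad$ and its reductions, the elementary local fact that $\Gad$ becomes $\PGL_N$ over ${\bf F}_q$ exactly because $q\equiv 1\pmod p$ makes $q$ split completely in ${\bf Q}(\zeta_p)$, and the standard invocation of strong approximation. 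Finally, to obtain Theorem~\ref{main} in \S\ref{sec4} one notes that any finite group $H$ embeds in $\SL(|H|+1,{\bf F}_q)$, hence in $\PSL(N,q)$ for any admissible $N>|H|$ and any admissible $q$, and then pulls back along the surjection $\Gamma_g\rightarrow\PSL(N,q)$ of Theorem~\ref{main2} a subgroup of $\PSL(N,q)$ surjecting $H$ (e.g.\ $H$ itself), obtaining a finite-index subgroup of $\Gamma_g$ that surjects $H$.
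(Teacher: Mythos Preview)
Your proposal follows the same route as the paper---TQFT representations indexed by primes $p\equiv 3\pmod 4$, Larsen--Wang density at the definite place, then strong approximation at completely split primes---but packages it on the adjoint side (the projective representation $\Gamma_g\to\Gad$) rather than on the simply connected side (the linear representation $\widetilde\Gamma_g\to\mathcal G=\SU(V_p,H_p)$) as the paper does. Your endgame, using perfectness of $\Gamma_g$ to pin the image inside $\PSL(N,q)\subset\PGL(N,q)$, is a pleasant alternative to the paper's descent from $\widetilde\Gamma_g$ to $\Gamma_g$ via the observation that $\PSL(N,q)$ has trivial centre.

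Two points of your execution are looser than the paper's and deserve care. First, the Weisfeiler/Nori strong approximation theorem (Theorem~\ref{weis} here) is stated for \emph{simply connected} groups; your phrase ``applied via the simply connected cover of $\Gad$'' hides exactly the step the paper makes explicit by lifting to $\widetilde\Gamma_g$ and working inside $\mathcal G$. To run your argument as written you must either perform that lift (which lands you back in the paper's setup) or cite a form of the theorem valid for adjoint groups, asserting that the reductions of a Zariski-dense subgroup contain the image of the simply connected group at almost all places. Second, Weisfeiler's theorem carries a hypothesis on the adjoint trace field of the image, which you do not address; the paper devotes Proposition~\ref{41} and \S\ref{adf} to this, though Remark~\ref{2.8} explains why it is ultimately harmless for the conclusion once one restricts to completely split primes. (A minor point: your appeal to irreducibility is superfluous, since analytic density in the compact real form already yields Zariski density in $\mathcal G$ via Lemmas~\ref{chevally} and~\ref{biggerZD}, exactly as the paper argues.)
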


In addition we show that Theorem \ref{main2} 
also holds for the Torelli group (with $g\geq 2$).

It is worth emphasizing 
that
one cannot expect to prove Theorem \ref{main} simply using
the subgroup structure of the groups $\Sp(2g,{\bf Z}/N{\bf Z})$. The reason
for this is 
that since $\Sp(2g,{\bf Z})$ has the Congruence Subgroup 
Property (\cite{BMS}), it is well-known that not all finite groups
are involved in
$\Sp(2g,{\bf Z})$ (see \cite{LR} Chapter 4.0 for example).

An interesting feature of the proof of Theorem \ref{main} is that it
exploits the unitary representations arising 
in Topological Quantum Field Theory (TQFT) first constructed by Reshetikhin and
Turaev \cite{RT}. We actually use the so-called $\SO(3)$-TQFT
following the skein-theoretical approach of \cite{BHMV}
(see \S \ref{sec3} for a brief resum{\'e} of this).

We briefly indicate the strategy of the proof of Theorem \ref{main2}.
The unitary representations 
that we consider
are indexed
by primes $p$ congruent to $3$ modulo $4$. For each such $p$
we exhibit a group
$\Delta_g$ which is the image of a certain 
central
extension
$\widetilde{\Gamma}_g$ of $\Gamma_g$ and satisfies
$$\Delta_g\  \subset \SL(N_p,\BZ[\zeta_p])~,$$
where $\zeta_p$ is a primitive $p$-th root of unity, and $\BZ[\zeta_p]$ 
is the ring of integers in ${\bf Q}(\zeta_p)$. Moreover, the dimension
$N_p\rightarrow \infty$ as we vary $p$.
The key part of the proof is
the following. We exhibit infinitely many rational primes $q$, 
and prime ideals $\tilde q \subset \BZ[\zeta_p]$ 
satisfying $\BZ[\zeta_p]/\tilde q\simeq \BF_q$, for which
the reduction homomorphism $\pi_{\tilde q}$ 
       from $\SL(N_p,\BZ[\zeta_p])$         to $\SL(N_p,q)$
(induced by the isomorphism
$\BZ[\zeta_p]/\tilde q\simeq \BF_q$) 
restricts to a surjection
$\Delta_g\twoheadrightarrow \SL(N_p,q)$.

{}From this, it is then easy to get surjections
$\Gamma_g \twoheadrightarrow \PSL(N_p,q)$, 
which will complete
the proof. The details of how all of this is 
achieved are given in \S\ref{sec4}.

The paper is organized as follows. In \S \ref{sec2} we collect some background
on algebraic and arithmetic subgroups of (special) unitary groups, as well
as what is needed for us from Strong Approximation.  This is all
well-known, but 
we include this to help make the paper more self-contained. In \S \ref{sec3} we
discuss the (projective) unitary representations of
$\Gamma_g$ arising from $\SO(3)$-TQFT
and a density result for these representations due to Larsen and Wang
\cite{LW}. 
In \S \ref{sec4} we put the pieces together
to prove Theorems \ref{main} and \ref{main2} 
following the strategy outlined above. 
Finally, in \S \ref{sec5} we make some additional comments
about Theorem \ref{main}. In particular, 
how Theorem \ref{main} is perhaps reflective of 
some more ``rank 1'' phenomena for $\Gamma_g$.\\[\baselineskip]
\noindent{\bf Acknowledgements:}~{\em The authors wish to thank the organizers of two conferences in June 2009 at which they
first began thinking about this problem: 
"From Braid groups to Teichmuller spaces", and  "On Interactions between Hyperbolic Geometry, Quantum
Topology and Number Theory" at  C.I.R.M. Luminy and Columbia
University respectively. We also wish to thank 
Ian Agol, Mathieu Florence, and Matt Stover for helpful
conversations. We would particularly like to thank Gopal Prasad who
helped enormously in clarifying various points about algebraic groups,
their $k$-forms and fields of definition that are used in \S
\ref{sec4}.  The second author thanks Max Planck Institute for Mathematics for
its hospitality whilst working on this.

The second author was partially supported by the NSF.}
\begin{remark} {\em Whilst in the process of completing the writing
of this paper we have
learned
that similar results have recently been proved by L. Funar 
\cite{Fu}.}\end{remark}

\section{Algebraic and arithmetic aspects of unitary groups} \label{sec2}

It will be convenient to recall some of the basic background of unitary groups,
algebraic groups arising from Hermitian forms (over number fields, local
fields and finite fields), 
their arithmetic subgroups, and some aspects of
the Zariski topology that we will make use of.

We begin by fixing some notation. Throughout this paper
we will fix $p$ to be 
an odd prime, which will be assumed congruent 
to 3 modulo 4 from \S \ref{sec3} on.
Let 
$\zeta=\zeta_p$ 
denote a primitive $p$-th root of unity, $K_p$ (or
simply $K$ if no confusion will arise)
will denote the cyclotomic field ${\bf Q}(\zeta)$ and ${\mathcal O}_K$ its ring
of integers. We will let the maximal real subfield of $K_p$ be denoted
by $k=k_p$, with
corresponding ring of integers ${\mathcal O}_k$. We will assume that these fields
always come with a specific embedding into $\bf C$.
$K_p$ is a totally imaginary quadratic extension
of the totally real field $k_p$, and both are Galois extensions
of $\bf Q$.

If $G < \GL(m,{\bf C})$ is an algebraic group, and $R\subset {\bf C}$ is a subring then we will denote the $R$-points of $G$ by $G(R) = G \cap \GL(m,R)$.
We will identify $G$ with its complex points.

\subsection{} For more details about the material covered in this section
see \cite{PR}, \cite{Sh1} and \cite{Sh2}.

 First, consider the extension of fields $K/k$. Fixing an embedding of 
$K\subset {\bf C}$,
complex conjugation induces a Galois automorphism of $K$ fixing $k$
(since $\overline{\zeta}   = \zeta^{-1}$).

Now $K/k$ has a 
$k$-basis $\{1,\zeta\}$, and for $\alpha \in K$, we can
express the $k$-linear map 
       $L_z(\alpha) = z  \alpha$ 
in terms 
of the above basis. If $z=a+b\zeta$
with $a, b\in k$ then 
$L_z$ is represented by
the following element of $M(2,k)$:
 $$L_z=\begin{pmatrix}
         a & -b \\
         b & a+bt
\end{pmatrix}
$$
where $t=\zeta+\zeta^{-1}$.
 Extending the $k$-linear map $L$ in the obvious way,
it follows that $\SL(N,K)$ 
may be embedded in $\GL(2N,{\bf C})$ 
as an algebraic group
defined over $k$. Clearly, $\SL(N,K)$ maps into $\SL(2N,k)$.
Furthermore, since $\{1,\zeta\}$ generates ${\mathcal O}_K$ over ${\mathcal O}_k$,
then $\SL(N,{\mathcal O}_K)$ maps into $\SL(2N,{\mathcal O}_k)$.

Let $V=K^N$ and $H$ a non-degenerate Hermitian form on $V$. 
The {\em special unitary group}

$$\SU(V,H) = \{A\in 
\SL(N,{\bf C}):
 \overline{A}^tHA = H\}$$

\noindent 
also has the structure of an algebraic group defined over $k$ (where
$\overline{A}$ denotes 
complex conjugation of matrices.)
This is because $L_{\bar z}$ is represented by the matrix
$$L_{\bar
 z}=
\begin{pmatrix} a+bt&b\\
                  -b&a \end{pmatrix}
$$ 
so that when we embed $K$ into $M_2(k)$ using the map $L$, complex
conjugation becomes the restriction of a self-map of $M_2(k)$
defined over $k$.

We will denote this algebraic group by $\mathcal G$, and we will frequently blur the distinction between
$\SU(V,H)$ and $\mathcal G$. 

The group $\SU(V,H;{\mathcal O}_K) = \SU(V,H) \cap \SL(N,{\mathcal O}_K)$ embeds in 
$\SL(2N,k)$ as a subgroup 
commensurable with 
${\mathcal G}({\mathcal O}_k)$. 
Indeed, in this case, using the remark above regarding
the image of $\SL(N,{\mathcal O}_K)$, we deduce that
the image of $\SU(V,H;{\mathcal O}_K)$ 
is actually equal to
${\mathcal G}({\mathcal O}_k)$.
Denoting this image group by $\Gamma$, 
then $\Gamma$ is an arithmetic
subgroup of a product ${\bf SU} = \SU(p_1,q_1)\times \ldots \SU(p_s,q_s)$,
of special unitary groups that arise from $\SU(V,H)$
in the following way (see \cite{BH} and \cite{Sh1} for more details). 
Let $\sigma_1, \ldots \sigma_d$ denote the Galois embeddings of 
$k\hookrightarrow {\bf R}$ (with $\sigma_1$ chosen to be
the identity embedding).  We will
that assume that at $\sigma_1$
$$\SU(V,H;{\bf R}) = {\mathcal G}({\bf R}) \cong \SU(p_1,q_1),$$
\noindent where $p_1+q_1=N$ and $p_1,q_1>0$.  

Applying a Galois embedding $\sigma_i$ to $\mathcal G$ produces an algebraic
group defined over $\sigma_i(k)=k$ whose real points thereby determine
another special unitary group of some signature.  Assume that for $i=1,\ldots s$
this special unitary group, denoted by $\SU(p_i,q_i)$,
is not isomorphic to $\SU(N)$ ({\em i.e.,} is non-compact) and
for $i=s+1,\ldots r$ the special unitary group is isomorphic to $\SU(N)$. The
theory of arithmetic groups then shows that $\Gamma$ is an
arithmetic subgroup of ${\bf SU} = \SU(p_1,q_1)\times \ldots \SU(p_s,q_s)$.
Thus ${\bf SU}/\Gamma$ has finite volume, and moreover, 
if $s\neq r$, the quotient ${\bf SU}/\Gamma$ is compact, or equivalently
$\Gamma$ contains no unipotent elements (\cite{BH}).

If ${\bf K}$ denotes the maximal compact subgroup of ${\bf SU}$, then
the arithmetic groups described above determine finite volume
quotients of the symmetric space ${\bf SU}/{\bf K}$. In fact the full
group of holomorphic isometries is obtained by projectivizing these
groups; i.e. $\Gamma$ projects to an arithmetic lattice in
$\P{\bf SU} = \P\SU(p_1,q_1)\times \ldots \times \P\SU(p_s,q_s)$ (see \cite{Bo1} and \cite{BH}). 
Notice that for each $p_i,q_i$, there is a natural epimorphism
$\SU(p_i,q_i)\rightarrow \P\SU(p_i,q_i)$ whose
kernel consists of $N$th-roots of unity, and in particular is 
finite.

\subsection{} We maintain the notation of the previous subsection.
Let $\mathcal V$
denote the set of non-archimedean places of $k$. If $\cal P$
is a prime ideal in ${\mathcal O}_k$, we will write ${\cal \nu}_{\cal P}$
for the place in $\mathcal V$
associated to $\cal P$, and often simply write
$\nu$. The theory of the group
$\mathcal G$ over the local fields $k_\nu$ is well-understood and we summarize
what is needed for us (see \cite{PR} 
Chapter 2.3.3, \cite{Ti1} and \cite{Ti}).

Suppose that $L/F$ is a finite extension of number fields, with
rings of integers 
${\mathcal O}_L$ and ${\mathcal O}_F$
respectively. Let $\nu$ be
a place associated to a prime ideal 
${\cal P} \subset {\mathcal O}_F$.
Then
the behavior of $\cal P$ in $L/F$ is determined by how the
${\mathcal O}_L$-ideal ${\cal P}{\mathcal O}_L$
factorizes.  We say that $\nu$ (or the prime
$\cal P$) {\em splits completely} in $L/F$
if 
${\cal P}{\mathcal O}_L$
decomposes 
as a product of precisely $[L:F]$ 
pairwise distinct
prime ideals in 
${\mathcal O}_L$
(each of norm $q$
the rational prime lying below $\cal P$).

Consider the degree 2 extension $K/k$, and so a $k$-prime either remains
prime 
in
$K$, is ramified in $K$ or splits into a product of two 
distinct primes. The structure of ${\mathcal G}(k_\nu)$ depends on the
splitting type described above. Briefly, in the first two cases, the
persistence of the quadratic extension locally is enough to show that
${\mathcal G}(k_\nu)$ is a special unitary group. However,
if $\nu$ splits as a product of two primes
in the quadratic extension $K/k$, then 
$k_\nu \otimes_k K \cong k_\nu \times k_\nu$ is not a quadratic field extension
of $k_\nu$. Using this, it can be shown that
$${\mathcal G}(k_\nu)\cong\{(A,B)\in \SL(N,k_\nu)\times \SL(N,k_\nu)~:~
                        A={H^{-1}}^tB^{-1}H\}\cong \SL(N,k_\nu).$$

For more details see the discussion in 
Chapter 2.3.3
\cite{PR} , or \cite{Ti1} p. 55
and \cite{Ti}.  We summarize what is needed from this discussion  in
the 
       following

\begin{theorem}
\label{local}
Suppose that $q$ is a rational prime that splits completely to $K$, and $\nu$ 
a place of $k$ dividing $q$. Then
${\mathcal G}(k_\nu)\cong \SL(N,k_\nu) \cong \SL(N,{\bf Q}_q)$.\end{theorem}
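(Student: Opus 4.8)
The plan is to analyze the local structure of $\mathcal G$ at a prime $\nu \mid q$ where $q$ splits completely in $K/{\bf Q}$. First I would observe that since $K/{\bf Q}$ is Galois and $q$ splits completely in $K$, it also splits completely in the intermediate field $k$; so $\nu$ is a degree-one place of $k$, meaning $k_\nu \cong {\bf Q}_q$. This already identifies the second isomorphism in the statement, $\SL(N,k_\nu)\cong\SL(N,{\bf Q}_q)$, and reduces everything to proving ${\mathcal G}(k_\nu)\cong\SL(N,k_\nu)$.

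For the first isomorphism, the key point is to determine the splitting type of $\nu$ in the quadratic extension $K/k$. Because $q$ splits completely all the way up to $K$, each $k$-prime above $q$ — in particular $\mathcal P=\mathcal P_\nu$ — must split (not remain inert, not ramify) in $K/k$: otherwise the residue degree or ramification index up in $K$ would exceed $1$, contradicting complete splitting of $q$ in $K$. Hence we are precisely in the split case discussed just above the theorem, where $k_\nu\otimes_k K\cong k_\nu\times k_\nu$ rather than a field extension. I would then invoke the computation already recorded in the excerpt: in this situation the Hermitian form becomes "split" over $k_\nu$, complex conjugation gets exchanged with the swap of the two factors, and one obtains
$${\mathcal G}(k_\nu)\cong\{(A,B)\in\SL(N,k_\nu)\times\SL(N,k_\nu): A={H^{-1}}^t B^{-1}H\}\cong\SL(N,k_\nu),$$
the last isomorphism being projection onto the first factor (the second coordinate is determined by the first). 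Combining with $k_\nu\cong{\bf Q}_q$ gives the claim.

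I do not expect any serious obstacle here: the theorem is essentially a bookkeeping statement extracting the split case from the general local theory of unitary groups over a quadratic extension, all of which is quoted from \cite{PR}, \cite{Ti1}, \cite{Ti}. The only genuine content is the elementary ramification-theory observation that complete splitting of $q$ in the top field $K$ forces the local place $\nu$ of $k$ to (i) have residue field ${\bf F}_q$, so $k_\nu\cong{\bf Q}_q$, and (ii) split in $K/k$; once that is in place, the structural isomorphism ${\mathcal G}(k_\nu)\cong\SL(N,k_\nu)$ is exactly the displayed formula established in the preceding paragraph of the text. So the write-up is short: verify the two consequences of complete splitting via multiplicativity of ramification indices and residue degrees in the tower ${\bf Q}\subset k\subset K$, then cite the already-derived local description of $\mathcal G$ in the split case.
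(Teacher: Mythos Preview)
Your proposal is correct and matches the paper's approach exactly: the paper does not give a separate formal proof but simply states the theorem as a summary of the preceding discussion, noting afterward that $k_\nu\cong{\bf Q}_q$ because $q$ splits completely. Your write-up just makes explicit the two elementary ramification-theoretic observations (that complete splitting in $K$ forces both $k_\nu\cong{\bf Q}_q$ and splitting of $\nu$ in $K/k$) before invoking the displayed isomorphism for the split case, which is precisely what the paper intends.
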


The last isomorphism in Theorem \ref{local} follows from the fact
that for the places $\nu$ in Theorem \ref{local}, $k_\nu\cong {\bf Q}_q$.
That there are infinitely many such primes $q$ follows 
from Cebotarev's density theorem.

For all but finitely many primes 
${\cal P}\subset {\mathcal O}_k$, 
we can also consider $\mathcal G$ as an algebraic group
over the residue class field ${\bf F}_\nu ={\bf F}_{\cal P} \cong {\bf F}_q$ 
(see \cite{PR} pp. 142--143). Moreover, by \cite{PR} 
Chapter 3 Proposition 3.20,
for these primes the reduction
map ${\mathcal G}({\mathcal O}_k)\rightarrow {\mathcal G}({\bf F}_{\cal P})$ is a
surjective
homomorphism.  
Thus,
together with Theorem \ref{local} we deduce:

\begin{corollary}
\label{residue}
Suppose that $q$ is a rational
prime that splits completely to $K$, and $\cal P$ 
a $k$-prime dividing $q$.  Then for all but finitely many such
primes $\cal P$, ${\mathcal G}({\bf F}_{\cal P})\cong \SL(N,q)$.\end{corollary}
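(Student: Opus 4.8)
The plan is to reduce the explicit local description of ${\mathcal G}(k_\nu)$ from the previous subsection modulo ${\cal P}$, in place of completing at $\nu$. First I would pin down the splitting behaviour. Since $q$ splits completely in $K/{\bf Q}$ and $k\subset K$, the prime $q$ also splits completely in $k/{\bf Q}$, so ${\bf F}_{\cal P}={\mathcal O}_k/{\cal P}\cong {\bf F}_q$ for every $k$-prime ${\cal P}$ dividing $q$; counting primes above $q$ then forces each such ${\cal P}$ to split in the quadratic extension $K/k$ into a product of two distinct primes, whence ${\mathcal O}_K\otimes_{{\mathcal O}_k}{\bf F}_{\cal P}\cong {\bf F}_q\times {\bf F}_q$, with the nontrivial $k$-automorphism of $K$ (complex conjugation) reducing to the coordinate swap on ${\bf F}_q\times {\bf F}_q$.

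Next I would use the explicit matrix model of \S\ref{sec2}. The embedding of ${\mathcal G}=\SU(V,H)$ into $\GL(2N)$ built from the matrices $L_z$, $L_{\bar z}$ and the Hermitian form $H$ produces an affine group scheme over ${\mathcal O}_k[1/M]$ for a suitable integer $M$ (one divisible by $p$ and by the discriminant of $H$, so that $H$ stays invertible after reduction). By \cite{PR} (the discussion on pp.~142--143, together with Chapter~3, Proposition~3.20), for all but finitely many ${\cal P}$ this reduction is a well-defined algebraic group over ${\bf F}_{\cal P}$ and the reduction homomorphism ${\mathcal G}({\mathcal O}_k)\to {\mathcal G}({\bf F}_{\cal P})$ is surjective. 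I would restrict to ${\cal P}$ avoiding the primes dividing $M$ together with this finite exceptional set.

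For such ${\cal P}$, reducing the defining equation $\overline{A}^tHA=H$ modulo ${\cal P}$ --- now with the bar denoting the coordinate swap on ${\bf F}_q\times{\bf F}_q$, exactly as in the split local case treated in \S\ref{sec2} --- identifies
$${\mathcal G}({\bf F}_{\cal P})\cong\{(A,B)\in \SL(N,{\bf F}_q)\times\SL(N,{\bf F}_q):A={H^{-1}}^tB^{-1}H\}\cong \SL(N,{\bf F}_q)=\SL(N,q),$$
the last isomorphism being projection onto the first coordinate. Combined with Theorem \ref{local} this is exactly the assertion of the corollary.

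The step I expect to be the main nuisance is the bookkeeping of the finite exceptional set, rather than any conceptual difficulty: one must ensure that $H$ remains non-degenerate modulo ${\cal P}$ (so the reduced form still defines a group of type $A_{N-1}$), that the integral model is smooth at ${\cal P}$ (so that ${\mathcal G}({\bf F}_{\cal P})$ really is the group of ${\bf F}_q$-points of the reduced group and Proposition~3.20 of \cite{PR} applies), and that ${\cal P}\nmid p$ so that the arithmetic of $K/k$ behaves as above. Each of these conditions excludes only finitely many ${\cal P}$, which is precisely the content of the phrase ``all but finitely many'' in the statement.
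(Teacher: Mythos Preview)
Your argument is correct and follows essentially the same route as the paper, which deduces the corollary in one line from Theorem~\ref{local} together with the general reduction theory cited from \cite{PR} (pp.~142--143 and Proposition~3.20). You have simply unpacked that deduction: rather than first identifying ${\mathcal G}$ with $\SL(N)$ over $k_\nu$ and then reducing, you carry out the split-quadratic computation directly over the residue field ${\bf F}_{\cal P}$, which is the same calculation one level down. The bookkeeping you flag (non-degeneracy of $H$ mod ${\cal P}$, smoothness of the integral model, ${\cal P}\nmid p$) is exactly what the paper sweeps into the phrase ``for all but finitely many primes'' via the cited references.
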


\subsection{} 
We continue with the notation above.  Being an algebraic subgroup of
$\SL(N,{\bf C})$, 
$\mathcal G$ comes equipped with the Zariski topology, and so 
in particular is Zariski closed
by definition. It also has the {\em analytic} (``usual'') topology arising
from the subspace topology inherited from $\SL(N,{\bf C})$. Thus
given a subgroup $D<{\mathcal G}$ we can talk about its Zariski
closure and analytic closure. Furthermore ${\mathcal G}({\bf R})$
is a Lie group and a real algebraic
group, and as such we can talk about the real Zariski
closure and analytic closure of subgroups $D<{\mathcal G}({\bf R})$.  We
collect some facts about the interplay between these topologies on 
these groups and their subgroups that will be used.

The following lemma is due to Chevalley (see \cite{Wit} Prop. 4.6.1).

\begin{lemma}
\label{chevally}
Let $D<\SU(N)$ be a subgroup. Then $D$ is Zariski closed in $\SU(N)$ if and 
only if it is analytically closed.\end{lemma}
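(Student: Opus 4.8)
The statement to prove is Lemma~\ref{chevally}: a subgroup $D < \SU(N)$ is Zariski closed if and only if it is analytically closed. The plan is to treat the two implications separately. The easy direction is that Zariski closed implies analytically closed: this is immediate because the Zariski topology on $\SU(N)$ (viewed as a real algebraic group, or as a complex algebraic group intersected with the compact real form) is coarser than the analytic topology — polynomial maps are continuous for the analytic topology, so a common zero set of polynomials is analytically closed. No compactness is needed here.

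**The substantive direction.** The real content is the converse: if $D$ is analytically closed in $\SU(N)$, then $D$ is Zariski closed. Here is where one uses that $\SU(N)$ is compact. First I would pass to $\bar D^{\,\mathrm{Zar}}$, the Zariski closure of $D$ inside $\SU(N)$ (equivalently inside $\SL(N,\mathbf{C})$, then intersected back with $\SU(N)$); it is a Zariski closed subgroup, hence by the easy direction it is analytically closed, and it is a compact Lie group containing $D$. The goal is to show $D = \bar D^{\,\mathrm{Zar}}$. Since $D$ is analytically closed and $D \subseteq \bar D^{\,\mathrm{Zar}}$, it suffices to show $D$ is analytically \emph{dense} in $\bar D^{\,\mathrm{Zar}}$. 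The key point is that a Zariski-dense subgroup of a compact Lie group is automatically analytically dense. To see this, let $A$ be the analytic closure of $D$ in $\bar D^{\,\mathrm{Zar}}$; then $A$ is a closed (compact) subgroup of the compact Lie group $\bar D^{\,\mathrm{Zar}}$, so $A$ is itself a Lie group with finitely many connected components, and in particular $A$ is the real points of a real algebraic subgroup — compact Lie groups are algebraic (this is the crucial input: a compact real Lie group carries a canonical structure of a real algebraic group, and any closed subgroup is algebraic). Hence $A$ is Zariski closed. But $D \subseteq A$ and $D$ is Zariski dense in $\bar D^{\,\mathrm{Zar}}$, so $\bar D^{\,\mathrm{Zar}} \subseteq A \subseteq \bar D^{\,\mathrm{Zar}}$, giving $A = \bar D^{\,\mathrm{Zar}}$. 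Therefore $D$ is analytically dense in $\bar D^{\,\mathrm{Zar}}$, and being analytically closed, $D = \bar D^{\,\mathrm{Zar}}$ is Zariski closed.

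**The main obstacle.** The step that carries the weight — and that one should really be citing rather than reproving — is the fact that every compact Lie group (and every closed subgroup thereof) is a real algebraic group, so that its analytic and Zariski topologies have the same closed subgroups. This is a classical fact (going back to Chevalley), and indeed the reference \cite{Wit} Prop.~4.6.1 is presumably exactly this packaging. In the write-up I would simply invoke that reference, since reproving the algebraicity of compact Lie groups (via, say, the Peter--Weyl theorem to embed $\SU(N)$ equivariantly and then a polynomial-invariants argument) would be a substantial detour. Everything else — the coarseness of the Zariski topology, the reduction to Zariski density, the compactness of closed subgroups — is routine.
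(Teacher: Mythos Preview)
Your argument is correct. Note, however, that the paper does not actually prove this lemma: it simply attributes the result to Chevalley and cites \cite{Wit}~Prop.~4.6.1 without further comment. So there is no ``paper's own proof'' to compare against; you have supplied what the paper deliberately outsources.

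Your write-up correctly isolates the one nontrivial input --- that compact Lie groups and their closed Lie subgroups are real algebraic --- and correctly identifies this as precisely the content of the cited reference. The reduction you give (pass to the Zariski closure $Z$ of $D$, take the analytic closure $A$ of $D$ inside $Z$, observe that $A$ is a closed Lie subgroup hence algebraic by Chevalley, and conclude $A=Z$ by Zariski density) is the standard unpacking of the statement and is not circular, since Chevalley's structural result is about closed \emph{Lie} subgroups rather than arbitrary subgroups. If you were writing this for the paper, the appropriate move is exactly what the authors did: cite and move on.
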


\begin{lemma}
\label{biggerZD}
Suppose that $D<{\mathcal G}(k)$ is (real) Zariski dense in ${\mathcal G}({\bf R})$, 
then $D$ is Zariski dense in $\mathcal G$.\end{lemma}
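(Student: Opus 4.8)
The plan is to use the fact that $\mathcal{G}$ is a connected algebraic group defined over $k$ together with the relation between complex and real Zariski closures. First I would pass to the Zariski closure $\overline{D}^{\,Z}$ of $D$ inside $\mathcal{G}$ (taken in the complex Zariski topology on $\SL(N,\mathbf{C})$); since $D<\mathcal{G}(k)$ consists of $k$-rational points, this closure is an algebraic subgroup of $\mathcal{G}$ defined over $k$. The goal is to show $\overline{D}^{\,Z}=\mathcal{G}$, and by connectedness of $\mathcal{G}$ it suffices to show that $\overline{D}^{\,Z}$ has the same dimension as $\mathcal{G}$.

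The key point is to compare dimensions using the real points. Because $\overline{D}^{\,Z}$ is defined over $k\subset\mathbf{R}$, its real points $\overline{D}^{\,Z}(\mathbf{R})$ form a real algebraic (hence Lie) subgroup of $\mathcal{G}(\mathbf{R})$, and $D\subset\overline{D}^{\,Z}(\mathbf{R})$. Hence the real Zariski closure of $D$ in $\mathcal{G}(\mathbf{R})$ is contained in $\overline{D}^{\,Z}(\mathbf{R})$. By hypothesis $D$ is real Zariski dense in $\mathcal{G}(\mathbf{R})$, so $\overline{D}^{\,Z}(\mathbf{R})=\mathcal{G}(\mathbf{R})$. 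Now one invokes the standard fact that for an algebraic group $H$ defined over $\mathbf{R}$, the real points $H(\mathbf{R})$ are Zariski dense in the identity component of $H$ (in fact $\dim_{\mathbf{R}} H(\mathbf{R}) = \dim_{\mathbf{C}} H$ when $H(\mathbf{R})$ is Zariski dense, e.g.\ when $H$ is connected and has an $\mathbf{R}$-point, which holds here as $\mathcal{G}(\mathbf{R})\supset D\neq\emptyset$). Applying this to both $\overline{D}^{\,Z}$ and $\mathcal{G}$ gives
$$\dim_{\mathbf{C}}\overline{D}^{\,Z} = \dim_{\mathbf{R}}\overline{D}^{\,Z}(\mathbf{R}) = \dim_{\mathbf{R}}\mathcal{G}(\mathbf{R}) = \dim_{\mathbf{C}}\mathcal{G}.$$
Since $\overline{D}^{\,Z}$ is a closed subgroup of the connected group $\mathcal{G}$ of full dimension, $\overline{D}^{\,Z}=\mathcal{G}$, i.e.\ $D$ is Zariski dense in $\mathcal{G}$.

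The main obstacle, and the step requiring the most care, is justifying the equality of real and complex dimensions, i.e.\ that a $k$-group whose real points are Zariski dense in its real locus already has those real points Zariski dense in the whole complex group. This rests on the fact that $\mathcal{G}=\SU(V,H)$ is connected (it is a form of $\SL_N$, hence connected and even simply connected) and that a connected algebraic group defined over $\mathbf{R}$ with an $\mathbf{R}$-point has Zariski-dense real points; these are classical results (see, e.g., Borel, \emph{Linear Algebraic Groups}, or the references \cite{PR}, \cite{Wit} already cited). One should also check that the closure $\overline{D}^{\,Z}$ really is defined over $k$: this follows because $D\subset \mathcal{G}(k)$, so the ideal of $\overline{D}^{\,Z}$ is generated by polynomials with coefficients in $k$ (the Zariski closure of a set of $k$-points is defined over the perfect field $k$). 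With these ingredients in place the argument is a short diagram-chase through the inclusions of closures.
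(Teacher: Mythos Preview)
Your argument is correct and follows essentially the same route as the paper's own proof: take the (complex) Zariski closure $Z$ of $D$, observe it is defined over $k$ because $D\subset\mathcal G(k)$, deduce $Z(\mathbf R)=\mathcal G(\mathbf R)$ from the real Zariski density hypothesis, and then pass back up to $Z=\mathcal G$. The only cosmetic difference is in this last step: the paper phrases it as ``$Z$ and $\mathcal G$ are the complexifications of $Z(\mathbf R)$ and $\mathcal G(\mathbf R)$, so the defining ideals agree,'' whereas you make the same point via the dimension equality $\dim_{\mathbf R}Z(\mathbf R)=\dim_{\mathbf C}Z$ together with connectedness of $\mathcal G$; both encode the standard fact that the real points of a connected $\mathbf R$-group are Zariski dense.
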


\noindent{\bf Proof:}~Let $Z$ denote the Zariski closure of $D$ in $\mathcal G$.
Since $D<{\mathcal G}(k)$, $Z$ is an algebraic subgroup of $\mathcal G$ 
defined over $k$ (see \cite{Bo} Chapters I.1.3, and AG 14.4 for example).
Hence $Z({\bf R})<{\mathcal G}({\bf R})$ are real algebraic groups defined
over $k$, and so are real Zariski closed sets. But the Zariski closure of
$D$ in ${\mathcal G}({\bf R})$ is ${\mathcal G}({\bf R})$, and so it follows that 
$Z({\bf R})={\mathcal G}({\bf R})$.

Now viewed as real algebraic groups, the groups
${\mathcal G}({\bf R})$ and $Z({\bf R})$ are defined over $k$. The
algebraic groups $\mathcal G$ and $Z$ are also defined over $k$
and are simply the complexifications
of these real algebraic groups. Thus the ideals of polynomials defining
${\mathcal G}({\bf R})$ and $\mathcal G$ (resp. $Z({\bf R})$ and $Z$) agree. 
{}From this it follows that $Z=G$ as required.\qed

\subsection{} We will apply Strong Approximation, and
in particular, a corollary of
Theorem 10.5 of \cite{Wei} (see also \cite{No}).
Note that $\mathcal G$ 
is an absolutely almost simple simply connected algebraic group defined over
$k$ (i.e the only proper normal algebraic subgroups of $\mathcal G$ are finite)
which is required in \cite{Wei}.

For convenience we state the main
consequence of Theorem 10.5 and Corollary 10.6 of \cite{Wei} (see also  the discussion
in Window 9 of \cite{LS}) in our context.
\begin{definition} {\em The {\em adjoint trace field}
of a subgroup  $D<{\mathcal G}(k)$ is defined to be the field 
${\bf Q}(\{\tr(\Ad~\gamma):\gamma\in D\}$. Here $\Ad$ denotes the adjoint
representation of $\mathcal G$ on its Lie algebra.}
\end{definition}

\begin{theorem}
\label{weis}
Let $\mathcal G$ be as above, and let
$D<{\mathcal G}(k)$ be a finitely generated
Zariski dense subgroup of ${\mathcal G}$ such that
the adjoint trace field of $D$
is $k$.
Then for all but finitely many $k$-primes $\cal P$, the 
reduction homomorphism
$\pi_{\cal P} : D \rightarrow {\mathcal G}({\bf F}_{\cal P})$
is surjective.\end{theorem}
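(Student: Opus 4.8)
The plan is to deduce this from Weisfeiler's Strong Approximation theorem (Theorem 10.5 and Corollary 10.6 of \cite{Wei}), whose hypotheses must be verified for $\mathcal{G}$ and $D$. Since $\mathcal{G} = \SU(V,H)$ is absolutely almost simple and simply connected as an algebraic group over $k$ (as already noted in the excerpt), and $D < \mathcal{G}(k)$ is finitely generated and Zariski dense in $\mathcal{G}$ by hypothesis, Weisfeiler's theorem applies to give a strong approximation statement: the closure of $D$ in the relevant restricted product of local groups $\prod_{\nu} \mathcal{G}(\mathcal{O}_{k_\nu})$ (away from a finite set of places $S$, which includes the archimedean places and the finitely many bad primes) is open, and in fact equals the product of the local integral points for all but finitely many $\nu$. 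The first step is therefore to record that the adjoint-trace-field hypothesis is exactly what pins down the relevant field of definition: the condition that the adjoint trace field of $D$ equals $k$ ensures that $D$ is not conjugate into $\mathcal{G}'(k')$ for a proper subfield $k' \subsetneq k$ and a $k'$-form $\mathcal{G}'$ — this is the condition (sometimes phrased in terms of the traces of the adjoint representation generating the right field) under which Weisfeiler/Nori give surjectivity onto the $\mathcal{O}_k/\mathcal{P}$-points rather than merely onto a subgroup defined over a smaller ring.

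Second, I would translate the ``open image / product'' conclusion of Strong Approximation into the pointwise surjectivity statement claimed. Concretely, for all but finitely many primes $\mathcal{P}$ of $\mathcal{O}_k$, the reduction map $\mathcal{G}(\mathcal{O}_k) \to \mathcal{G}(\mathbf{F}_{\mathcal{P}})$ is defined and surjective (this is the fact cited from \cite{PR}, Chapter 3, Proposition 3.20, for the good primes), and the closure of $D$ surjects onto the local factor $\mathcal{G}(\mathcal{O}_{k_\nu})$ for all but finitely many $\nu$; composing, $\pi_{\mathcal{P}}(D) = \mathcal{G}(\mathbf{F}_{\mathcal{P}})$ for all but finitely many $\mathcal{P}$. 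One should be a little careful to merge the two finite exceptional sets — the bad primes for the group scheme $\mathcal{G}$ over $\mathcal{O}_k$, and the primes excluded by Weisfeiler's theorem — into a single finite set, and to note that the residue fields $\mathbf{F}_{\mathcal{P}}$ vary over all the local cases discussed in \S2.2, so the target $\mathcal{G}(\mathbf{F}_{\mathcal{P}})$ is either a finite (special) unitary group or, for $\mathcal{P}$ lying over a rational prime split in $K/k$, is $\SL(N,q)$ by Theorem \ref{local} and Corollary \ref{residue}.

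The main obstacle, and the only genuinely nontrivial point, is the verification that the hypotheses of \cite{Wei} are met in the precise form needed — in particular checking that ``finitely generated, Zariski dense, with adjoint trace field $k$'' is exactly the package of hypotheses under which Corollary 10.6 of \cite{Wei} yields surjectivity onto the full residue-field points. The Zariski density and finite generation are assumed outright; simple connectedness and absolute almost simplicity of $\mathcal{G}$ were recorded in \S2.4; so the content is really just citing the literature correctly and observing that the adjoint trace field condition is the field-of-definition condition appearing there (cf.\ also the discussion in Window 9 of \cite{LS}). Everything else — good reduction away from a finite set, the identification of the local and residual groups — has been assembled in \S2.1 and \S2.2. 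Hence the proof is essentially a citation of Strong Approximation together with the bookkeeping of exceptional primes.
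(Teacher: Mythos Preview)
Your proposal is correct and follows essentially the same approach as the paper: both invoke Weisfeiler's Strong Approximation (Theorem~10.5 and Corollary~10.6 of \cite{Wei}) to conclude that the closure of $D$ in the restricted product $\prod_{\nu\notin T}\mathcal{G}(\mathcal{O}_{k_\nu})$ is open, and then deduce from openness that $D$ is dense in each local factor $\mathcal{G}(\mathcal{O}_{k_\nu})$, whence the reduction $\pi_{\mathcal P}$ is surjective for all but finitely many $\mathcal P$. Your discussion is somewhat more expansive---you spell out the role of the adjoint trace field hypothesis and preview the identification of the residue groups (which really belongs to Corollary~\ref{weis2} rather than to this theorem)---but the argument is the same.
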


\noindent{\bf Proof:}~We briefly discuss how this is deduced from
Theorem 10.5 and Corollary 10.6 of \cite{Wei}.  Since $D$ is finitely
generated, apart from a finite set of places in 
$\mathcal V$, the image of $D$
(which we will identify with $D$) under the embedding ${\mathcal
  G}(k)\hookrightarrow {\mathcal G}(k_\nu)$, lies in the subgroup ${\mathcal
  G}({\mathcal O}_{k_\nu})$.  Now the conclusion of Corollary 10.6 of
\cite{Wei} states that there is a (perhaps different) finite set
$T \subset \mathcal V$
so that the closure of $D$ in the restricted direct
product group 
      $\prod_{{\mathcal V}\setminus T}{\mathcal G}({\mathcal O}_{k_\nu})$ 
is
open.  That this closure is open, in particular implies that 
for all ${\nu } \in {\mathcal V} \setminus T$, the closure of $D$ in the $\nu$-adic topology
is all of ${\mathcal G}({\mathcal O}_{k_\nu})$.  It follows that the
associated reduction homomorphism 
$\pi_{\cal P}$ is surjective.\qed\\[\baselineskip]
\noindent 
Theorem \ref{weis} together with Corollary \ref{residue} now 
shows the following:

\begin{corollary}
\label{weis2}
Let $D<{\mathcal G}(k)$ be a finitely generated
Zariski dense subgroup of $\mathcal G$ such that
the 
adjoint trace field of $D$
coincides with $k$.
Then there are infinitely many $k$-primes $\cal P$ of norm $q$ a prime in $\bf Z$, 
for which 
the reduction homomorphism $\pi_{\cal P} : D \rightarrow \SL(N,q)$
is surjective.\end{corollary}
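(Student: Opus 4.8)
The plan is to deduce Corollary~\ref{weis2} from Theorem~\ref{weis} and Corollary~\ref{residue}, using Cebotarev's density theorem to guarantee an infinite supply of $k$-primes to which both statements apply simultaneously.

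First I would invoke Cebotarev's density theorem to produce infinitely many rational primes $q$ that split completely in the extension $K/{\bf Q}$. Since $k\subset K$, any such $q$ also splits completely in $k/{\bf Q}$; fix a $k$-prime ${\cal P}$ dividing $q$. As ${\cal P}$ has residue degree one over ${\bf Q}$, its norm is exactly $q$ and ${\bf F}_{\cal P}\cong{\bf F}_q$. By Corollary~\ref{residue}, which applies precisely to $k$-primes $\cal P$ of this type, for all but finitely many such $\cal P$ we have ${\mathcal G}({\bf F}_{\cal P})\cong \SL(N,q)$.

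Next I would apply Theorem~\ref{weis}: the hypotheses of Corollary~\ref{weis2} are exactly those of Theorem~\ref{weis} (namely $D<{\mathcal G}(k)$ is finitely generated, Zariski dense in $\mathcal G$, and has adjoint trace field equal to $k$), so there is a finite set $S$ of $k$-primes such that for every $\cal P\notin S$ the reduction homomorphism $\pi_{\cal P}\colon D\to{\mathcal G}({\bf F}_{\cal P})$ is surjective. Composing with the isomorphism ${\mathcal G}({\bf F}_{\cal P})\cong \SL(N,q)$ from the previous step then gives a surjection $D\twoheadrightarrow \SL(N,q)$ for every $\cal P$ that avoids both finite exceptional sets.

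Finally I would check the bookkeeping: the set of rational primes $q$ splitting completely in $K$ is infinite, each such $q$ has only finitely many $k$-primes lying above it, and we are discarding only finitely many $k$-primes in total (those in $S$ together with the finitely many exceptions coming from Corollary~\ref{residue}). Hence infinitely many $k$-primes $\cal P$, each of prime norm $q$, survive, and for each of these the composite $\pi_{\cal P}\colon D\to \SL(N,q)$ is surjective, as claimed. I do not expect a genuine obstacle here: the only point requiring a little care is verifying that the two ``all but finitely many'' conditions remain compatible with, and do not exhaust, the infinite Cebotarev set of split primes, which is immediate from the finiteness of the number of $k$-primes above a given rational prime.
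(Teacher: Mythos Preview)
Your proposal is correct and follows essentially the same approach as the paper, which simply states that the corollary follows from Theorem~\ref{weis} together with Corollary~\ref{residue}; you have merely spelled out the details (including the use of Cebotarev's theorem, already noted after Theorem~\ref{local}) that the paper leaves implicit.
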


\begin{remark}\label{2.8}{\em It is clear from the proof that Theorem~\ref{weis}
    and Corollary~\ref{weis2} also hold if 
we assume the adjoint trace field is a subfield $\ell \subset k$,
provided that $\mathcal G$ can be defined over
    $\ell$, and $D$ lies in the $\ell$-points of $\mathcal G$ (the point
being that a rational prime that splits completely in $k$ must split completely in $\ell$). 
This observation will allow for a shortcut in our proof of
Theorem~\ref{main} in \S\ref{sec4}.
}\end{remark}

\section{The $\SO(3)$-TQFT representations} \label{sec3}

We briefly recall some of the background from 
the $SO(3)$-TQFT
constructed in
\cite{BHMV} and its integral version constructed in \cite{GM}.  We
also record some consequences of this and \cite{LW} that we will make use of.
{}From now on, 
we only consider the case where the prime $p$ satisfies
$p\equiv 3 \pmod 4 $.

\begin{remark}{\em It is possible to make everything what
follows work for all odd primes, but doing so requires
some modifications and some extra arguments in the case $p\equiv 1
\pmod 4 $. Since primes 
$p \equiv 3 \pmod 4 $ are enough to prove Theorems~\ref{main} and
\ref{main2}, we prefer to restrict to that case for simplicity.
}\end{remark}

Let $\Sigma$ be  a
compact oriented surface of genus $g$ without boundary, and let
$\Gamma_g$ be its 
mapping class group. The integral $SO(3)$-TQFT constructed in
\cite{GM} provides a representation of a central extension $\widetilde
\Gamma_g$  of $\Gamma_g$ 
by $\BZ$
on a free lattice ({\em i.e.} a free module of 
finite rank)
${\mathcal S}_p(\Sigma)$ over the
ring of cyclotomic integers $\BZ[\zeta_p]$ : $$\rho_p \,:\, \widetilde
\Gamma_g \longrightarrow \GL({\mathcal S}_p(\Sigma))\simeq 
\GL(N_g(p),\BZ[\zeta_p])~,$$ 
where $N_g(p)$ is the rank of ${\mathcal S}_p(\Sigma)$.
We
refer to this representation as the SO(3)-TQFT-representation. 
Some results and conjectures about this representation are discussed
in \cite{Ma}.
We also denote by
$V_p(\Si)$ the 
$K$-vector space ${\mathcal S}_p(\Sigma)\otimes K$ where
$K=\BQ(\zeta_p)$ as in \S \ref{sec2}.
The $V_p$-theory is a
version of the Reshetikhin-Turaev TQFT
associated with the Lie group $SO(3)$, and we think of ${\mathcal S}_p$ as an
integral refinement of that theory (see 
\cite{GM} for more
details). The rank $N_g(p)$ of ${\mathcal S}_p(
\Sigma)$ is given by a Verlinde-type formula and goes to infinity as
$p\rightarrow \infty$. The construction uses the skein theory of the
Kauffman bracket with Kauffman's skein variable $A$ specialized to
$A=-\zeta_p^{(p+1)/2}$. Note that $A^2=\zeta_p$ and $A$ is a primitive $2p$-th root of
unity.

We assume $g\geq 3$, so that $\Gamma_g$ is perfect and $H^2(\Gamma_g;
\BZ)\simeq \BZ$. It is customary in TQFT to take the extension $\widetilde
\Gamma_g$ to be isomorphic to Meyer's signature extension, whose cohomology class is  $4$ times a generator of $H^2(\Gamma_g;
\BZ)\simeq \BZ$. However, in this paper we take $\widetilde
\Gamma_g$ so that the cohomology class $[\widetilde
\Gamma_g]$ is a generator of $H^2(\Gamma_g;
\BZ)$. Thus our $\widetilde
\Gamma_g$ is (isomorphic to) an index four subgroup of the signature
extension. The advantage of this choice is that $\widetilde
\Gamma_g$ is a perfect group. In fact, $\widetilde
\Gamma_g$ is a universal central extension of $\Gamma_g$ for $g\geq
4$.

\begin{remark} {\em The are various constructions of these central
    extensions of the mapping class group from the  TQFT point
 of view. We will not discuss them here as the details are not relevant
 for this paper. To be specific, we follow the approach in  \cite{GM2}, except for notation:
 our  $\widetilde
\Gamma_g$ is denoted by  $\widetilde
\Gamma_g^{++}$ in  \cite{GM2}.
Up to isomorphism, this is the same as the extension denoted by
$\widetilde\Gamma_1$ in \cite{MR}.
}\end{remark}

The
generator of the kernel of $\widetilde \Gamma_g\rightarrow \Gamma_{g}
$ acts as multiplication by  
            $\zeta_p^{-6}$
on ${\mathcal
  S}_p(\Sigma)$.
(This is the fourth power of the number $\kappa$ as given in
\cite[\S 11]{GM2}.)
Since 
$\zeta_p^{-6}\neq 1$,
the TQFT-representation
$\rho_p$ induces only a projective representation of the mapping class
group $\Gamma_{g} $.

\begin{notation} 
Henceforth, the
image group $\rho_p(\widetilde{\Gamma}_g)$ will be denoted by
$\Delta_g$.
\end{notation}

\begin{remark}{\em The following observation will be used in the
    proof of our main theorem: 
If we have a surjection from $\Delta_g$ to a finite
group $H$, the induced surjection 
$$\widetilde{\Gamma}_g\twoheadrightarrow
\Delta_g\twoheadrightarrow H$$ will factor through a surjection
$\Gamma_g \twoheadrightarrow H$ as soon as $H$ has no non-trivial central
element of order $p$ (because $\widetilde{\Gamma}_g\rightarrow
\Gamma_g$ is a central extension and the generator of its kernel is
sent to an element of order $p$ in $\Delta_g$). In particular if $H$ 
has trivial center this will hold.
}\end{remark}

We now refine the strategy outlined in \S \ref{sec1}.
First, as   
observed in \cite{DW}, the map $$\det
\circ \rho_p : \widetilde \Gamma_g \longrightarrow
\BZ[\zeta_p]^{\times}$$ is trivial,  
since $\widetilde\Gamma_g$ is perfect. Therefore the group
$\Delta_g=\rho_p(\widetilde{\Gamma}_g)$ is contained in a
special linear group: 
$$\Delta_g\  \subset \ \SL({\mathcal S}_p(\Sigma))\simeq 
      \SL(N,\BZ[\zeta_p])~,$$
where $N=N_g(p)$.
The primes $\tilde q$ in $\BZ[\zeta_p]$ 
mentioned
in \S \ref{sec1} lie above
those rational primes $q$ which split completely in
$\BZ[\zeta_p]$. For every such prime $\tilde q$ of $\BZ[\zeta_p]$ lying
over $q$, we can consider the group 
$$\pi_{\tilde q}(\Delta_g) \subset \SL(N,q)~,$$ where $\pi_{\tilde q}$ is the reduction homomorphism from
$\SL(N,\BZ[\zeta_p])$ to $\SL(N,q)$ induced by the isomorphism
$\BZ[\zeta_p]/\tilde q\simeq \BF_q$.
The
 key step in the proof of Theorem~\ref{main2} 
is to establish
 \begin{equation} \label{eqpi} \pi_{\tilde q}(\Delta_g) = \SL(N,q)
\end{equation} for all but finitely many
 such $\tilde q$. This will be an application of
 Corollary~\ref{weis2} and is described in \S \ref{sec4}. 
Thus, as announced in \S \ref{sec1}, we will have surjections $\Delta_g\twoheadrightarrow \SL(N,q)$ for
infinitely many primes q.
The surjections
 $\Gamma_g \twoheadrightarrow \PSL(N,q)$ follow easily and this 
will complete the proof of Theorem~\ref{main2}.

\begin{remark}{\em As far as proving the equality (\ref{eqpi}) for all
    but finitely many $\tilde q$, we do not actually need Integral
    TQFT. Here 
by Integral TQFT we mean
the fact that the TQFT-representation $\rho_p$
    preserves the lattice ${\mathcal S}_p(\Sigma)$ inside the
    TQFT-vector space  $V_p(\Si)={\mathcal S}_p(\Sigma)\otimes K$,
    which we used to arrange  that $\Delta_g
    =\rho_p(\widetilde\Gamma_g)
                                $ lies in $\SL(N,\BZ[\zeta_p])$ rather
    than just in $\SL(N,\BQ(\zeta_p))$. The point is that even if  $\Delta_g
   $ is only known to lie in $\SL(N,\BQ(\zeta_p))$,  we can still define $\pi_{\tilde
     q}(\Delta_g)$ for all but finitely many $\tilde q$ (because
   $\Delta_g$ is a finitely generated group, and so involves only finitely
   many primes in the denominators of its matrix entries). This
   is enough for our application of  Corollary~\ref{weis2}. On
   the other hand, it is interesting to know that the group $\pi_{\tilde
     q}(\Delta_g)$ is always defined, and one may ask which are the
   exceptional primes $q$ (if any) for which this
   group is strictly smaller than $\SL(N,q)$?
}\end{remark}  

In order to apply Corollary~\ref{weis2} to $\Delta_g$, we need to describe the Zariski closure
 of $\Delta_g$ as an algebraic group defined over a number field, which we will now do in the
 remainder of this section. The first step is to observe that  
$\Delta_g$ lies in a (special) unitary group.
This is because, as always in TQFT, the representation $\rho_p$
preserves a non-degenerate Hermitian form. Here, conjugation is given by
$\overline{\zeta_p} = \zeta_p^{-1}$. 
Let us denote by $H_p$ the 
Hermitian form 
on the vector space $V_p(\Si)$
defined in
\cite{BHMV}. There is a basis of  $V_p(\Si)$
which is orthogonal for this form; moreover the diagonal
terms of the matrix of $H_p$ in this basis lie in the
maximal real subfield $k$. 
Explicit formulas for these diagonal terms are
given in 
\cite[Theorem~4.11]{BHMV}. 

\begin{remark}{\em (i) Note that $H_p$ is denoted by $\langle\ ,\
\rangle_\Si$ in 
\cite{BHMV}. We are using here that $p\equiv 3\pmod 4$, because in
this case the coefficient $\eta=\langle S^3 \rangle_p$ which appears in
\cite[Theorem~4.11]{BHMV} lies in $k$. Indeed, we have $\overline \eta
=\eta$ and it is shown in \cite[Lemma
4.1(ii)]{GMW} that $\eta^{-1}$ (which is called ${\mathcal D}$ in
\cite{GMW,GM}) lies in $\BZ[\zeta_p]$.

(ii) It is shown in \cite{GMW,GM} that one can rescale the hermitian form so that its values on the lattice ${\mathcal S}_p(
\Sigma)$ lie in $\BZ[\zeta_p]$ (it suffices to multiply the form by
the number 
$\mathcal D$).
}\end{remark}

 Thus the Hermitian
form $H_p$ is defined over $k$. As in \S 2.1, let $\mathcal G$ be the
group $\SU(V_p(\Si), H_p)$; this is an algebraic group $\mathcal G$
defined over $k$, and $$\Delta_g=\rho_p(\widetilde \Gamma_g) \subset
{\mathcal G} (k).$$

The signature of the Hermitian form $H_p$
depends on the choice of
$\zeta_p$ in $\BC$. For 
the choice $$ A=i^p e^{2\pi i/4p}, \ \ \zeta_p = A^2 = (-1)^p e^{2\pi i/2p}
= (e^{2\pi i/p})^{(p+1)/2}$$
the form $H_p$ is positive
definite so that ${\mathcal G} ({\bf R})$ is isomorphic to the usual
special unitary group $\SU(N)$  where $N=N_g(p)=\rk\ {\mathcal S}_p(
\Sigma) = \dim \ V_p(\Si)$.  
For other choices of  $\zeta_p$ in $\BC$ the form is
typically indefinite as soon as the genus is at least two \cite[Remark
4.12]{BHMV}.

We now recall the following result of Larsen and Wang \cite{LW}. 

\begin{theorem}\cite{LW}
\label{LarsenWang}
For the choice of root of unity given above, $\Delta_g$ projects
to a subset of $\PSU(N)$ that is dense in the analytic
topology.\end{theorem}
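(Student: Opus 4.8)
\emph{Plan.}  Theorem~\ref{LarsenWang} is, up to matching conventions, the main density theorem of Larsen and Wang \cite{LW}, so the plan is to reduce to their statement and then recall the shape of the argument.  The reduction requires checking only the following points.  The representation $\rho_p$ on $V_p(\Si)\cong{\bf C}^N$ is the $\SO(3)$-TQFT representation analyzed in \cite{LW}; passing to the central extension $\widetilde\Gamma_g$ changes nothing once one projects to $\PSU(N)$, since the kernel of $\widetilde\Gamma_g\to\Gamma_g$ acts by scalars.  The hypotheses $g\geq 3$ and $p\equiv 3\pmod 4$ lie within the range treated in \cite{LW}.  Finally, the normalization $A=i^pe^{2\pi i/4p}$ is precisely the one for which $H_p$ is positive definite, so that the relevant compact group is $\SU(N)$ and the density is in $\PSU(N)$ --- equivalently in the full projective unitary group, since $\U(N)=\SU(N)\cdot\U(1)$ with $\U(1)$ central --- rather than in some $\P\SU(p_1,q_1)$ attached to an indefinite form.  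Granting these, Theorem~\ref{LarsenWang} is a restatement of their result.

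\emph{Structure of the density argument.}  Let $\widetilde G=\overline{\Delta_g}\subseteq\SU(N)$ be the analytic closure of $\Delta_g=\rho_p(\widetilde\Gamma_g)$, a compact Lie group; it suffices to prove $\widetilde G=\SU(N)$, for then the closure of the image of $\Delta_g$ in $\PSU(N)$ is all of $\PSU(N)$.  First, the TQFT representation on $V_p(\Si)$ is irreducible when $p$ is prime (Roberts), so $\widetilde G$ acts irreducibly on ${\bf C}^N$; hence $Z(\widetilde G)$ is scalar and the identity component $\widetilde G^0$ is semisimple.  Second, one rules out $\widetilde G$ being finite --- a genuine alternative, since irreducible finite subgroups of $\SU(N)$ exist in every dimension --- by analyzing the eigenvalues of the images of Dehn twists, which are prescribed powers of $A$ and hence involve roots of unity whose orders grow with $p$, together with the structure of the groups generated by twists along intersecting curves; for $p$ large this is incompatible with a finite closed image, so $\widetilde G^0\neq 1$.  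Third, for $\widetilde G^0$ a nontrivial connected semisimple irreducible subgroup of $\SU(N)$, the classification of such subgroups permits only $\SU(N)$ itself, the self-dual groups $\SO(N)$ and $\Sp(N)$, the imprimitive or tensor-decomposable subgroups, and finitely many exceptional small-representation cases.  One excludes the remaining possibilities using the TQFT gluing formula: a subsurface $\Sigma'\subset\Sigma$ with fixed boundary colours (a four-holed sphere or a one-holed torus) contributes its mapping class group acting through a single summand of $V_p(\Si)$, and by Freedman--Larsen--Wang the associated braid-group and $\SL(2,{\bf Z})$ subrepresentations are themselves dense for $p$ large and lie in $\widetilde G$ in general position; a bootstrap argument of Larsen type --- using also that many Dehn-twist images act on suitable subspaces with only two eigenvalues, so that classification results for groups generated by near-reflections apply --- then forces $\widetilde G^0\supseteq\mathfrak{su}(N)$, the self-dual and imprimitive alternatives being incompatible with the explicit twist eigenvalues and with the TQFT Hermitian form.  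Hence $\widetilde G=\SU(N)$.

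\emph{Main obstacle.}  The two genuinely hard points are the uniform exclusion of a finite closed image and the combinatorial bookkeeping in the third step: one must exhibit, for the relevant $p$ and boundary colourings, enough ``atomic'' subsurface representations that are simultaneously infinite, in general position, and span $\mathfrak{su}(N)$.  This is precisely the technical heart of \cite{LW} (building on Freedman--Larsen--Wang); since our $g$ and $p$ are covered there, I would simply invoke their theorem rather than reprove it.
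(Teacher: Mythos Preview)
Your proposal is correct and, in fact, goes well beyond what the paper does: the paper gives no proof at all of Theorem~\ref{LarsenWang}, simply citing it as the main result of \cite{LW}, with only a short remark afterward reconciling the sign convention on $A$ (the same point you address in your reduction paragraph). Your sketch of the Larsen--Wang argument is a reasonable outline of their method, but for the purposes of this paper the one-line citation is all that is offered or needed.
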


\begin{remark} {\em Larsen and Wang actually take $A= i
    e^{2\pi i/4p}$ if $p\equiv 3 \pmod 4$. This differs from our
    choice of $A$ by a sign. The explanation is that Larsen and Wang
take
$A$ to be a primitive $p$-th root whereas in the
    skein-theoretic approach to TQFT
of \cite{BHMV} which we are using, $A$ must be a primitive
    $2p$-th root 
(essentially because in the axiomatics of \cite{BHMV}, Kauffman's
skein variable must be 
$A$     rather than $-A$).
However, 
in the $\SO(3)$-case, 
 the TQFT-representation $\rho_p$ of $\widetilde\Gamma_g$
only depends on $A^2=\zeta_p$, 
    so the sign of $A$ is, in fact, irrelevant here.
}\end{remark}

Since $\SU(N) \rightarrow \PSU(N)$ is a finite covering, 
a corollary of Theorem \ref{LarsenWang} is:

\begin{corollary}
\label{SUNdense}
With the notation as above, $\Delta_g$ is a dense
subgroup of $\SU(N)$ in the analytic topology.\end{corollary}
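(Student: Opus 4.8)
The plan is to deduce Corollary~\ref{SUNdense} from Theorem~\ref{LarsenWang} by a short argument using the fact that $\SU(N)\to\PSU(N)$ is a finite covering homomorphism of Lie groups, together with the fact that $\Delta_g$ is already a subgroup of $\SU(N)$ (not just a subset). The subtlety is that density of the \emph{projection} of $\Delta_g$ in $\PSU(N)$ does not formally imply density of $\Delta_g$ in $\SU(N)$ for an arbitrary subset; one genuinely uses that $\Delta_g$ is a subgroup and that the covering map is proper with finite kernel $Z$ (the $N$-th roots of unity, a finite central subgroup).

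First I would let $\overline{\Delta_g}$ denote the analytic (i.e.\ topological) closure of $\Delta_g$ in $\SU(N)$. Since $\Delta_g$ is a subgroup of the topological group $\SU(N)$, its closure $\overline{\Delta_g}$ is a closed subgroup, hence a compact Lie subgroup of $\SU(N)$. Let $\pi\colon \SU(N)\to\PSU(N)$ be the natural projection, with finite central kernel $Z$. Because $\pi$ is continuous and $\PSU(N)$ is compact (so $\pi$ is a closed map), $\pi(\overline{\Delta_g})$ is a closed subgroup of $\PSU(N)$ containing $\pi(\Delta_g)$. By Theorem~\ref{LarsenWang}, $\pi(\Delta_g)$ is dense in $\PSU(N)$, so its closure is all of $\PSU(N)$; hence $\pi(\overline{\Delta_g})=\PSU(N)$. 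Therefore $\overline{\Delta_g}$ is a closed subgroup of $\SU(N)$ that surjects onto $\PSU(N)$, which means $\overline{\Delta_g}\cdot Z=\SU(N)$.

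Next I would upgrade ``$\overline{\Delta_g}\cdot Z=\SU(N)$'' to ``$\overline{\Delta_g}=\SU(N)$'' using the structure of $\SU(N)$. The point is that $\overline{\Delta_g}$ is a closed (hence compact Lie) subgroup of $\SU(N)$ with $\dim\overline{\Delta_g}=\dim\SU(N)$ (since $Z$ is finite, $\pi$ restricted to $\overline{\Delta_g}$ is a surjection of Lie groups with finite kernel onto the full-dimensional $\PSU(N)$). A closed subgroup of a connected Lie group of full dimension contains the identity component, so $\overline{\Delta_g}\supseteq \SU(N)^{\circ}=\SU(N)$ because $\SU(N)$ is connected. (Alternatively: $\SU(N)$ is connected and $\overline{\Delta_g}\cdot Z=\SU(N)$ with $Z$ finite forces $\overline{\Delta_g}$ to meet every component of itself-times-$Z$, but the cleanest route is the dimension/identity-component argument.) Hence $\overline{\Delta_g}=\SU(N)$, i.e.\ $\Delta_g$ is dense in $\SU(N)$.

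The main obstacle—really the only nontrivial point—is the last step, ruling out the a~priori possibility that the closure of $\Delta_g$ is a proper closed subgroup whose image under $\pi$ happens to still be all of $\PSU(N)$; this is handled by the observation that such a subgroup would have to have full dimension and hence contain the identity component of the connected group $\SU(N)$. One might also invoke Lemma~\ref{chevally} to pass between Zariski and analytic closures if one prefers an algebraic-group formulation, but the direct Lie-theoretic argument above is the most economical. No genuinely hard estimate or deep theorem is needed here beyond Theorem~\ref{LarsenWang} itself; the corollary is essentially a formal consequence.
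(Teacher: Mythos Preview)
Your argument is correct and is exactly the approach the paper indicates: the paper's entire justification is the sentence ``Since $\SU(N)\rightarrow\PSU(N)$ is a finite covering,'' and you have simply supplied the (standard) details behind that remark. There is nothing to add or correct.
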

We also see from this discussion, and
that contained in \S 2.1, that $\Delta_g$
contains no unipotent elements.
\begin{corollary}
\label{allzariski}
In the notation above, $\Delta_g$ is Zariski dense in the algebraic
group $\mathcal G$.\end{corollary}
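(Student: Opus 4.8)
\textbf{Proof proposal for Corollary~\ref{allzariski}.}

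The plan is to combine the analytic density statement of Corollary~\ref{SUNdense} with Chevalley's lemma (Lemma~\ref{chevally}) and the comparison of real and complex Zariski closures in Lemma~\ref{biggerZD}. First I would work inside the compact real form: by Corollary~\ref{SUNdense}, $\Delta_g$ is dense in $\SU(N)$ in the analytic topology. I want to upgrade this to real Zariski density in ${\mathcal G}({\bf R})$. Since for our preferred choice of $\zeta_p$ we have ${\mathcal G}({\bf R})\cong \SU(N)$, I may apply Lemma~\ref{chevally} to the subgroup $D=\Delta_g<\SU(N)$: its Zariski closure $Z$ in $\SU(N)$ is analytically closed, but it contains the analytically dense set $\Delta_g$, hence $Z=\SU(N)$. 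Therefore $\Delta_g$ is Zariski dense in ${\mathcal G}({\bf R})$.

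Next I would pass from the real group to the complex algebraic group $\mathcal G$ via Lemma~\ref{biggerZD}. We already recorded in \S\ref{sec3} that $\Delta_g=\rho_p(\widetilde\Gamma_g)\subset {\mathcal G}(k)$, so the hypothesis $D<{\mathcal G}(k)$ of Lemma~\ref{biggerZD} is met, and we have just shown $\Delta_g$ is (real) Zariski dense in ${\mathcal G}({\bf R})$. Lemma~\ref{biggerZD} then gives immediately that $\Delta_g$ is Zariski dense in $\mathcal G$, which is the assertion of the corollary.

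I expect no serious obstacle here: the corollary is essentially a formal consequence of the analytic density theorem of Larsen--Wang (Theorem~\ref{LarsenWang}/Corollary~\ref{SUNdense}) once one has the two topological-comparison lemmas in hand. The only point requiring a little care is bookkeeping about the choice of root of unity: Lemma~\ref{chevally} and Corollary~\ref{SUNdense} both require the specific choice $A=i^p e^{2\pi i/4p}$ (equivalently $\zeta_p=A^2$) for which $H_p$ is positive definite and hence ${\mathcal G}({\bf R})\cong\SU(N)$; for other embeddings ${\mathcal G}({\bf R})$ is a noncompact $\SU(p_i,q_i)$ and Chevalley's lemma does not apply directly. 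But since Zariski density in the complex group $\mathcal G$ is independent of the embedding (it is an algebraic condition over $k$, invariant under the Galois action that permutes the archimedean places), it suffices to verify it at the one place where ${\mathcal G}({\bf R})$ is compact, which is exactly what the argument above does.
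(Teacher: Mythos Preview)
Your proof is correct and follows exactly the approach of the paper, which simply states that the corollary ``follows applying Lemma~\ref{chevally}, and Lemma~\ref{biggerZD}.'' You have merely spelled out the two steps (analytic density $\Rightarrow$ real Zariski density via Chevalley, then real Zariski density $\Rightarrow$ complex Zariski density via Lemma~\ref{biggerZD}) that the paper leaves implicit.
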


\noindent{\bf Proof:} This follows applying Lemma \ref{chevally}, and
Lemma \ref{biggerZD}.

\begin{remark} {\em (1) 
At present 
it remains open
whether the index of  $\Delta_g$ in 
the arithmetic group 
$\Gamma \simeq \mathcal G ({\mathcal O}_k)$
(see the discussion in \S 2.1)
is finite
or infinite.
If this index were finite
then $\Delta_g$ would have been arithmetic and so Zariski density would
follow from Borel density.

\noindent (2) We also note that Zariski density at other embeddings of $k$ into
    $\bf R$ follows easily from this, but we will not need to make use of this fact.
}\end{remark}

\section{Proof of the main results} \label{sec4}

\subsection{Proof of Theorems~\ref{main} and \ref{main2}} 

Fixing $g\geq 3$ and a prime $p\equiv 3 \pmod 4$, the discussion in 
\S \ref{sec3} 
shows that 
we have a representation $\rho_p$ of 
$\widetilde{\Gamma}_g$
whose image $\Delta_g$ lies in the 
$k$-points
of the algebraic
group $\mathcal G$ defined over 
$k$,  where $k$ is the maximal real subfield of the cyclotomic field
$K=\BQ(\zeta_p)$, with the root of unity $\zeta_p\in \BC$ chosen so that ${\mathcal G}({\bf R})\cong
\SU(N)$.
Moreover, $\Delta_g$ is Zariski dense in ${\mathcal G}$.
We wish to apply Corollary~\ref{weis2} to this situation. Notice that
all the hypotheses of this corollary are already satisfied, except the
hypothesis about the adjoint trace field. Denote the adjoint trace
field of $\Delta_g$ by 
$$\ell={\bf Q}(\tr(\Ad\,\gamma): \gamma \in \Delta_g)~.$$
As observed in Remark~\ref{2.8}, it is enough to check that $\mathcal G$ can be
defined over $\ell$, 
and that $\Delta_g$ lies in the $\ell$-points of  $\mathcal G$. 
This is the content of Proposition~\ref{41} below, which we prove next.

\begin{lemma} 
\label{ellisasubset}
We have $\ell
\subset k$.
\end{lemma}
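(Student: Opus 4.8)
The goal is to show that the adjoint trace field $\ell = \mathbf{Q}(\tr(\Ad\,\gamma) : \gamma \in \Delta_g)$ is contained in $k$, the maximal real subfield of $K = \mathbf{Q}(\zeta_p)$. The natural strategy is to exhibit enough structure that is manifestly defined over $k$. First I would recall from \S\ref{sec3} that $\Delta_g = \rho_p(\widetilde\Gamma_g) \subset \mathcal{G}(k)$, where $\mathcal{G} = \SU(V_p(\Si), H_p)$ is an algebraic group \emph{defined over} $k$. Since $\mathcal{G}$ is defined over $k$, its Lie algebra $\mathfrak{g}$ is a $k$-vector space, and the adjoint representation $\Ad : \mathcal{G} \to \GL(\mathfrak{g})$ is a morphism of algebraic groups defined over $k$. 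Therefore for any $\gamma \in \mathcal{G}(k)$, the operator $\Ad\,\gamma$ acts on $\mathfrak{g}(k)$ and its matrix (in a $k$-basis of $\mathfrak{g}$) has entries in $k$; in particular $\tr(\Ad\,\gamma) \in k$. Applying this to every $\gamma \in \Delta_g \subset \mathcal{G}(k)$ gives $\ell \subset k$ immediately.

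The key step, then, is simply the observation that $\Delta_g \subset \mathcal{G}(k)$ together with the fact that $\Ad$ is $k$-rational. The former was established in \S\ref{sec3}: it rests on the computation (via \cite[Theorem~4.11]{BHMV}, using $p \equiv 3 \pmod 4$) that the Hermitian form $H_p$ has a matrix with diagonal entries in $k$ in a suitable orthogonal basis, so that $\mathcal{G} = \SU(V_p, H_p)$ is defined over $k$, and on the fact that $\rho_p$ preserves the lattice $\mathcal{S}_p(\Si)$ over $\BZ[\zeta_p] \subset K$, whose matrix entries under complex conjugation transform as an algebraic self-map of $M_2(k)$ (as in \S2.1). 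So the matrix entries of elements of $\Delta_g$, viewed inside $\SL(2N, k)$ via the embedding $L$ of \S2.1, lie in $k$. The $k$-rationality of $\Ad$ is a general fact about algebraic groups defined over a field (see, e.g., \cite{Bo}), since the Lie algebra and the adjoint action are obtained by differentiating $k$-morphisms.

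I do not anticipate a serious obstacle here: this lemma is essentially bookkeeping, unpacking definitions that have already been set up. The one point to be careful about is to make sure we are using the correct model of $\mathcal{G}$ — namely the one defined over $k$ coming from the Hermitian form $H_p$, not the ambient $\SL(N, \BC)$ — so that the adjoint representation really is defined over $k$ and not merely over $K$ or $\BC$. Once that is pinned down, the containment $\ell \subset k$ follows. (The reverse containment, or the question of exactly which subfield of $k$ the adjoint trace field is, is the substantive point addressed in Proposition~\ref{41}, where one must show $\mathcal{G}$ descends to $\ell$ and $\Delta_g \subset \mathcal{G}(\ell)$; but that is not needed for this lemma.)
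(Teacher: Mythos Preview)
Your proposal is correct and follows essentially the same approach as the paper: both argue that since $\mathcal G$ is a $k$-algebraic group and $\Delta_g\subset \mathcal G(k)$, the adjoint representation is defined over $k$, so $\Ad\,\gamma$ lies in the $k$-points of the adjoint group (equivalently, acts on the $k$-form of the Lie algebra), forcing $\tr(\Ad\,\gamma)\in k$. The paper's proof is just a two-line version of what you have written, invoking $\Ad\,\gamma\in\Gad(k)$ directly rather than spelling out the Lie algebra picture.
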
 

\begin{proof} As in \S 2.1 and 2.2, we are considering $\mathcal G$ as a 
$k$-algebraic subgroup
of $\SL(2N)$. 
We denote the adjoint group $\Ad \,\mathcal G $ by $\Gad$.
 Since $\Delta_g\subset {\mathcal G}(k)$, we have
$\Ad\,\gamma \in \Gad(k)$ for all $\gamma \in \Delta_g$. This shows $\ell
\subset k$.
\end{proof}

\begin{proposition} \label{41} The group $\mathcal G$ 
can be defined over $\ell$, and one has $\Delta_g\subset {\mathcal G}(\ell)$.
\end{proposition}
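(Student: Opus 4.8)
The plan is to run a trace‑field descent argument, but using the \emph{adjoint} representation of $\mathcal G$ rather than the natural one. (The natural representation is useless here: realized as a $k$‑group via the embedding $\mathcal G\subset\SL(2N)$ of \S2.1, after extension of scalars to $\overline{k}$ it becomes the sum of the standard representation of $\SL_N$ and its dual, so it is far from absolutely irreducible over $k$.) Since $\mathcal G$ is absolutely almost simple of type $A_{N-1}$, its Lie algebra $\mathfrak{g}$ is absolutely simple and $\Ad\colon\mathcal G\to\GL(\mathfrak{g})$ is absolutely irreducible. Because $\Delta_g$ is Zariski dense in $\mathcal G$ (Corollary~\ref{allzariski}), the group $\Ad(\Delta_g)$ is Zariski dense in $\Gad=\Ad(\mathcal G)$ and still acts absolutely irreducibly on $\mathfrak{g}$ (a $\Delta_g$‑invariant subspace is $\Gad$‑invariant), while by construction the traces $\tr(\Ad\,\gamma)$, $\gamma\in\Delta_g$, generate $\ell$.

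First I would descend the adjoint group. Since $\Delta_g$ is closed under multiplication, the $\ell$‑linear span $B$ of $\Ad(\Delta_g)$ inside $\mathrm{End}_k(\mathfrak{g})$ is an $\ell$‑subalgebra, and the pairing $(X,Y)\mapsto\tr(XY)$ takes values in $\ell$ on $B$. This pairing is non‑degenerate on $B$: if $\tr\!\bigl(X\cdot\Ad\,\gamma\bigr)=0$ for all $\gamma\in\Delta_g$, then, since $\Ad(\Delta_g)$ spans $\mathrm{End}_k(\mathfrak{g})$ over $k$ (Burnside's theorem, using absolute irreducibility), $\tr(XZ)=0$ for all $Z$, hence $X=0$. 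Comparing with the non‑degenerate trace form on $\mathrm{End}_k(\mathfrak{g})$, the multiplication map $B\otimes_\ell k\to\mathrm{End}_k(\mathfrak{g})$ is an isometry onto all of $\mathrm{End}_k(\mathfrak{g})$, hence an isomorphism; so $B$ is a central simple $\ell$‑algebra of degree $\dim\mathfrak{g}$, split by $k$, and $\Ad(\Delta_g)\subset B^{\times}=\GL_1(B)(\ell)$. Letting $H$ be the Zariski closure of $\Ad(\Delta_g)$ in the $\ell$‑group $\GL_1(B)$, and using that Zariski closure commutes with the extension $k/\ell$ in characteristic zero, we get $H\times_\ell k\cong\Gad$, i.e. an $\ell$‑form of $\Gad$ with $\Ad(\Delta_g)\subset H(\ell)$.

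Since $\mathcal G$ is the simply connected cover of $\Gad$ and formation of the simply connected cover is functorial and compatible with the ground field, the simply connected cover $\mathcal G'$ of the $\ell$‑group $H$ satisfies $\mathcal G'\times_\ell k\cong\mathcal G$, which is the desired $\ell$‑structure on $\mathcal G$; under this identification the isogeny $\pi\colon\mathcal G'\to H$ base‑changes to $\Ad\colon\mathcal G\to\Gad$. The remaining point is that the elements of $\Delta_g$ themselves, and not merely their images in $\Gad$, lie in $\mathcal G'(\ell)$. For this I would use the commutator morphism: there is a morphism of $\ell$‑varieties $c\colon H\times H\to\mathcal G'$ with $c(x,y)=[\tilde x,\tilde y]$ for any lifts $\tilde x,\tilde y\in\mathcal G'$ of $x,y$ (well‑defined because $\ker\pi$ is central, and defined over $\ell$ because it is characterized by $\pi\circ c=$ the commutator map of $H$). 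Then for $a,b\in\Delta_g\subset\mathcal G(k)=\mathcal G'(k)$ one has $[a,b]=c(\Ad\,a,\Ad\,b)\in\mathcal G'(\ell)$ since $\Ad\,a,\Ad\,b\in H(\ell)$. As $\widetilde\Gamma_g$ is perfect, so is its quotient $\Delta_g$, whence every element of $\Delta_g$ is a product of commutators of elements of $\Delta_g$; therefore $\Delta_g\subset\mathcal G'(\ell)=\mathcal G(\ell)$.

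The main obstacle is exactly this last step — upgrading a descent of the adjoint group to one that captures $\Delta_g$ on the nose — because a priori the map $\mathcal G'(\ell)\to H(\ell)$ need not be surjective and a lift of an $\ell$‑point of $H$ is only well defined up to the central kernel. The commutator‑morphism device sidesteps this, and it is here that perfectness of $\widetilde\Gamma_g$ (already exploited in \S\ref{sec3} to place $\Delta_g$ in $\SL$ rather than $\GL$) is indispensable. The algebra step, by contrast, is routine once one remembers to use that $\Delta_g$ is a group (so its $\ell$‑span is an algebra) and that the adjoint action is absolutely irreducible.
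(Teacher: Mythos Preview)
Your proof is correct and follows the same three-step skeleton as the paper's: (i) descend $\Gad$ to $\ell$ with $\Ad(\Delta_g)\subset\Gad(\ell)$; (ii) lift the $\ell$-structure to the simply connected cover $\mathcal G$; (iii) use perfectness of $\Delta_g$ to upgrade $\Ad(\Delta_g)\subset\Gad(\ell)$ to $\Delta_g\subset\mathcal G(\ell)$. Where the paper simply cites Vinberg for (i) and Borel--Tits for (ii), you unpack these directly (your $\ell$-span/Burnside argument is essentially Vinberg's proof in this special case, and functoriality of the simply connected cover is what Borel--Tits provides). The genuine divergence is in step (iii). The paper argues via Galois cohomology: for $\sigma\in\Gal(k/\ell)$ the map $f_\gamma(\sigma)=\gamma\,\sigma(\gamma^{-1})$ is a $C(k)$-valued $1$-cocycle, the assignment $\gamma\mapsto f_\gamma$ is a homomorphism from $\Delta_g$ to the abelian group of such cocycles, and perfectness forces $f_\gamma\equiv 1$. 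Your commutator-morphism device is a more geometric alternative: rather than measuring the obstruction to $\ell$-rationality, you exhibit commutators directly as values of an $\ell$-morphism $c\colon H\times H\to\mathcal G'$ and then invoke perfectness. Both arguments exploit exactly the same phenomenon (the central ambiguity in lifting disappears on commutators), and neither is markedly shorter; yours has the mild advantage of avoiding explicit Galois descent. One small imprecision: the condition $\pi\circ c=(\text{commutator map of }H)$ alone does not pin down $c$, since post-multiplying by a fixed central element preserves it; either add the normalization $c(e,e)=e$, or simply note that $c$ arises by descent of the $\ell$-morphism $(\tilde x,\tilde y)\mapsto[\tilde x,\tilde y]$ along the finite flat quotient $\pi\times\pi$, which makes its $\ell$-rationality automatic.
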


\noindent{\bf Proof:}
By Vinberg's theorem \cite[Theorem 1]{Vi} (see also
\cite[(2.5.1)]{Mo}), Zariski 
density of $\Delta_g$ in ${\mathcal G}$ together with ${\bf
  Q}(\tr(\Ad\,\gamma): \gamma \in \Delta_g) =\ell$ imply that there
is an $\ell$-structure on $\Gad$ ({\em i.e.}, the group $\Gad$ can be
defined over $\ell$) so that $\Ad\,\Delta_g \subset
\Gad(\ell)$. Since $\mathcal G$ is simply connected, by a well-known result of Borel-Tits \cite{BT} (see also \cite[Section~2.2]{PR}), this $\ell$-structure on $\Gad$ can be lifted to an
$\ell$-structure on $\mathcal G$ so that the canonical projection
$\pi: \mathcal G \rightarrow \Gad$ is defined over $\ell$.

As already mentioned, Vinberg's theorem also gives that $\Ad\,\Delta_g \subset
\Gad(\ell)$. We must show that, in fact, $\Delta_g\subset {\mathcal
  G}(\ell)$. This is, however,  not a formal consequence of Vinberg's theorem,
but uses the fact that $\Delta_g$ is perfect. 
We proceed as follows. To show that $\Delta_g\subset {\mathcal
  G}(\ell)$, we will show that $\sigma(\gamma)=\gamma$ for every
$\gamma\in \Delta_g \subset {\mathcal G}(k) $ and $\sigma
\in \Gal(k / \ell)$ 
(recall that Lemma \ref{ellisasubset} shows that
$\ell \subset k$).  Consider the exact sequence 
$$C(k)
\rightarrow {\mathcal G} (k) \,\mapright{\pi} \,\Gad (k)$$ where $C$ is
the center of $\mathcal G$. Since $\Ad\,\gamma\in \Gad(\ell)$ for
$\gamma\in \Delta_g$, we have $$\pi(\sigma(\gamma))= \sigma
(\pi(\gamma)) = \pi(\gamma)$$ for every $\sigma\in\Gal(k/\ell)$. Hence the function $f_\gamma$ defined by
$$f_\gamma(\sigma) = 
\gamma \, \sigma(\gamma^{-1})$$ is a $C(k)$-valued $1$-cocycle on $\Gal(k
/ \ell)$. (One easily checks the cocycle condition
$f_\gamma(\sigma_1\sigma_2)=f_\gamma(\sigma_1)
\sigma_1(f_\gamma(\sigma_2))$.) Let $Z^1(\Gal(k
/ \ell); C(k))$ denote the space of such cocycles. It is an abelian
group (since $C(k)$ is abelian.) Moreover, the assignment $\gamma
\mapsto f_\gamma$ is a group
homomorphism from $\Delta_g$ to $Z^1(\Gal(k
/ \ell); C(k))$. But since $\Delta_g$ is perfect, this homomorphism is
trivial; in other words, we have $f_\gamma=1$ for all $\gamma\in
\Delta_g$. This shows that $\Delta_g\subset {\mathcal
  G}(\ell)$, as asserted. \qed
\begin{remark}{\em A natural question at this point is whether
    $\ell=k$. As far as the proofs of Theorems~\ref{main} and
    \ref{main2} are concerned, whether the answer is in the
    affirmative or not, does not matter
because, as observed in Remark~\ref{2.8}, we can simply apply
Corollary~\ref{weis2} with $\ell$ in 
place of $k$.
However, for completeness,
    and because it seems worthwhile recording, we will prove that indeed
    $\ell=k$ (using Proposition \ref{41}) in \S 4.3.}\end{remark}

We can now 
give the proof of Theorem \ref{main2} which is restated below for convenience.

\begin{theorem}
\label{pslquots}
For each $g\geq 3$, there exists infinitely many $N$ such that
for each such $N$, there exists infinitely
many primes $q$ such that $\Gamma_g$ surjects $\PSL(N,q)$.\end{theorem}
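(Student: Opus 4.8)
The plan is to assemble the pieces established in \S\ref{sec2} and \S\ref{sec3} and feed them into Corollary~\ref{weis2} (in the strengthened form of Remark~\ref{2.8}), then pass from the linear quotient to the projective one. First I would note that by the discussion at the start of \S\ref{sec4}, for each $g\geq 3$ and each prime $p\equiv 3\pmod 4$ we have a finitely generated group $\Delta_g=\rho_p(\widetilde\Gamma_g)$ sitting inside ${\mathcal G}(\ell)$, where $\ell$ is the adjoint trace field of $\Delta_g$, $\mathcal G$ is defined over $\ell$ (Proposition~\ref{41}), $\ell\subset k$ (Lemma~\ref{ellisasubset}), and $\Delta_g$ is Zariski dense in $\mathcal G$ (Corollary~\ref{allzariski}). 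Thus all the hypotheses of Corollary~\ref{weis2} hold with $\ell$ in place of $k$, as licensed by Remark~\ref{2.8}. The corollary then yields infinitely many $\ell$-primes of norm a rational prime $q$ for which the reduction $\pi_{\tilde q}(\Delta_g)=\SL(N,q)$, where $N=N_g(p)$; equivalently this establishes the key equality (\ref{eqpi}) for all but finitely many $\tilde q$ over rational primes $q$ that split completely in $K$ (hence in $\ell$). So for each fixed $p\equiv 3\pmod 4$ we obtain a surjection $\Delta_g\twoheadrightarrow\SL(N_g(p),q)$ for infinitely many primes $q$.

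Next I would descend to $\PSL$. Composing $\widetilde\Gamma_g\twoheadrightarrow\Delta_g\twoheadrightarrow\SL(N,q)$ with the quotient $\SL(N,q)\twoheadrightarrow\PSL(N,q)$ gives a surjection $\widetilde\Gamma_g\twoheadrightarrow\PSL(N,q)$. Now $\PSL(N,q)$ is simple for all but finitely many pairs $(N,q)$ — and in any case has trivial center — so by the Remark following the definition of $\Delta_g$ (the observation that a surjection from $\Delta_g$ to a centerless finite group $H$ factors through $\Gamma_g$, because the kernel generator of $\widetilde\Gamma_g\to\Gamma_g$ maps to an element of order $p$ in $\Delta_g$), the surjection factors through $\Gamma_g\twoheadrightarrow\PSL(N,q)$. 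This handles one value of $N$, namely $N=N_g(p)$, together with infinitely many primes $q$.

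Finally I would produce infinitely many distinct values of $N$. Since the rank $N_g(p)=\rk\,{\mathcal S}_p(\Sigma)$ is given by a Verlinde-type formula and, as recalled in \S\ref{sec3}, $N_g(p)\to\infty$ as $p\to\infty$, the set $\{N_g(p): p\equiv 3\pmod 4\}$ is infinite. Running the argument above over an infinite set of primes $p\equiv 3\pmod 4$ realizing infinitely many distinct values of $N_g(p)$, and for each such $N$ taking the infinitely many primes $q$ supplied by Corollary~\ref{weis2} (discarding the finitely many exceptional ones and those making $\PSL(N,q)$ non-simple or the splitting hypotheses fail), gives exactly the statement: infinitely many $N$, and for each, infinitely many primes $q$ with $\Gamma_g\twoheadrightarrow\PSL(N,q)$.

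The main obstacle — and really the only nontrivial point left at this stage — is verifying that Corollary~\ref{weis2} genuinely applies, i.e.\ that the adjoint trace field hypothesis is met. But that has been arranged by \emph{defining} $\ell$ to be the adjoint trace field and then using Vinberg's theorem and the perfectness of $\Delta_g$ (Proposition~\ref{41}) to show $\mathcal G$ is defined over $\ell$ and $\Delta_g\subset{\mathcal G}(\ell)$; combined with Remark~\ref{2.8} this removes the obstruction. A secondary point requiring care is bookkeeping the finite sets of excluded primes at each step (the bad $k$-primes in Theorem~\ref{weis}, the bad residue characteristics in Corollary~\ref{residue}, and the finitely many $q$ for which $\PSL(N,q)$ fails to be centerless/simple), but each exclusion is finite so infinitely many good $q$ survive for every chosen $N$.
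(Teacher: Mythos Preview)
Your proposal is correct and follows essentially the same route as the paper: you invoke Proposition~\ref{41} and Remark~\ref{2.8} to feed $\Delta_g\subset{\mathcal G}(\ell)$ into Corollary~\ref{weis2}, obtain $\Delta_g\twoheadrightarrow\SL(N,q)$ for infinitely many $q$, pass to $\PSL(N,q)$, and use the trivial-center observation (Remark~3.4) to descend from $\widetilde\Gamma_g$ to $\Gamma_g$, then vary $p$ to get infinitely many $N$. The only superfluous wrinkle is your hedging about excluding primes where $\PSL(N,q)$ might fail to be simple or centerless: $\PSL(N,q)$ has trivial center by definition, so no such exclusion is needed.
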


\noindent{\bf Proof:}~Fixing $g\geq 3$, the discussion in 
\S \ref{sec3} together with Proposition~\ref{41} 
shows that for every prime $p\equiv 3 \pmod 4$ 
we have a representation $\rho_p$ of 
$\widetilde{\Gamma}_g$
whose image $\Delta_g$ lies in the $\ell$-points of the algebraic
group $\mathcal G$ defined over $\ell$, where $\ell$ is a finite
Galois extension of $\BQ$ and ${\mathcal G}({\bf R})\cong \SU(N)$,
with $N=N_g(p)$ going to infinity as $p\rightarrow \infty$. 
Moreover, $\Delta_g$ is Zariski dense in ${\mathcal G}$ and 
its adjoint trace field is $\ell$.

Fixing such a dimension $N$ as above, we deduce from Corollary \ref{weis2} 
that there are infinitely many
rational primes $q$ such
that $\Delta_g$ surjects the groups $\SL(N,q)$.  Now
quotienting out by the center of $\SL(N,q)$ gives surjections
of $\Delta_g$ onto $\PSL(N,q)$. 
As remarked in Remark 3.3,
the induced homomorphisms 
$\widetilde{\Gamma}_g \rightarrow \PSL(N,q)$
will factor through $\Gamma_g$ since $\PSL(N,q)$ 
has trivial center.\qed\\[\baselineskip]
The proof of Theorem \ref{main} will be completed by 
the following basic fact about embedding
finite groups in the groups $\PSL(N,q)$.

\begin{lemma}
\label{symassubgroups}
Let $H$ be a finite group, then there exists an integer $N$ such that
for all odd primes $q$, $H$ is isomorphic
to a subgroup of $\PSL(N,q)$.
\end{lemma}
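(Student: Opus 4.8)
The plan is to realize any finite group $H$ inside some $\PSL(N,q)$ by first embedding $H$ into a symmetric group via Cayley's theorem, then embedding that symmetric group into $\SL(N,q)$ through a permutation representation, and finally passing to the quotient $\PSL(N,q)$. First I would let $n = |H|$ and use Cayley's theorem to embed $H \hookrightarrow S_n$. It then suffices to embed $S_n$ into $\PSL(N,q)$ for a suitable $N$ depending only on $n$ (hence only on $H$), uniformly in the odd prime $q$.

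Next I would take the permutation representation of $S_n$ on $\BF_q^n$, where $\sigma \in S_n$ acts by permuting coordinates. This gives an injection $S_n \hookrightarrow \GL(n,\BF_q)$. The issue is that permutation matrices need not have determinant $1$ — a transposition has determinant $-1$. To fix this, I would pass to the subgroup $\SL$ by a standard trick: either (a) restrict to $A_n$, which already lands in $\SL(n,q)$, and handle odd permutations separately; or, more cleanly, (b) twist the representation so that it lands in $\SL$. The cleanest route is to consider the representation of $S_n$ on $\BF_q^n$ together with a compensating action on an extra coordinate, i.e. embed $S_n \hookrightarrow \GL(n,q)$ as permutation matrices and then map into $\SL(n+1,q)$ by sending $\sigma \mapsto \mathrm{diag}(P_\sigma, \det(P_\sigma)^{-1})$, where $P_\sigma$ is the permutation matrix. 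Since $\det(P_\sigma) = \pm 1$ and $q$ is odd, $\det(P_\sigma)^{-1} = \pm 1 \in \BF_q$ is well-defined and this is an injective homomorphism into $\SL(n+1,q)$. So set $N = n+1 = |H|+1$.

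Finally I would compose $H \hookrightarrow S_n \hookrightarrow \SL(N,q)$ with the natural projection $\SL(N,q) \twoheadrightarrow \PSL(N,q)$ and check that the composite is still injective. The kernel of the projection is the center of $\SL(N,q)$, consisting of scalar matrices $\lambda I$ with $\lambda^N = 1$. An element of the image of $S_n$ that is scalar must have the block $P_\sigma$ equal to a scalar permutation matrix, forcing $\sigma = \mathrm{id}$ (a permutation matrix is scalar only if it is the identity); hence the intersection of the image with the center is trivial and the composite $H \hookrightarrow \PSL(N,q)$ is injective. The result holds with $N = |H| + 1$, independent of the odd prime $q$.

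I do not expect any serious obstacle here; the only point requiring a little care is the determinant adjustment to land inside $\SL$ rather than $\GL$, and the verification that the image meets the center of $\SL(N,q)$ trivially so that injectivity survives projection to $\PSL(N,q)$. Both are elementary. One could alternatively obtain a smaller $N$ (e.g. using $A_{n+2} \le \PSL$ type embeddings), but since only the existence of some $N$ depending on $H$ is needed, the crude bound $N = |H|+1$ suffices.
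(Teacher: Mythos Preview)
Your proof is correct and follows essentially the same route as the paper: Cayley's theorem, the permutation representation, the determinant-fixing embedding into $\SL(n+1,\,\cdot\,)$, and then passage to $\PSL$. The only minor difference is that the paper works over $\BZ$ and invokes Minkowski's theorem to ensure the reduction $\SL(n+1,\BZ)\to\SL(n+1,q)$ is injective on torsion, whereas you work directly over $\BF_q$ and verify by hand that no nontrivial permutation matrix is scalar; your version is slightly more elementary.
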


\noindent{\bf Proof:}~By Cayley's theorem, every finite group embeds
in a symmetric group. Thus it suffices to prove the lemma for symmetric
groups $S_n$.  Note first that $\PSL(N,q)$ has even order
and so will trivially contain a copy of $S_2$ (being isomorphic to
the cyclic group of order $2$).
Thus we can assume that $n\geq 3$. We first prove that $S_n$ 
injects
into $\SL(N,q)$ (for large enough $N$). 

To that end, recall that the standard permutation representation of
$S_n$ injects $S_n\hookrightarrow \GL(n,{\bf Z})$. Furthermore,
$\GL(n,{\bf Z})$ can be embedded in $\SL(n+1,{\bf Z})$ by sending
$g\in \GL(n,{\bf Z})$ to the element
$$
\begin{pmatrix} g & 0\\ 0& \epsilon (g)\end{pmatrix}~,
$$
\noindent where $\epsilon (g) =\pm 1$ depending on whether $\det (g)= \pm 1$. 

It is a well-known result of Minkowski that the kernels of the homomorphisms
$\SL(N,{\bf Z})\rightarrow \SL(N,q)$ are torsion-free
for $q$ an odd prime (see \cite{PR} Lemma 4.19). 
Hence the copies of $S_n$ constructed above
inject in $\SL(N,q)$ as required.

To pass to $\PSL(N,q)$, we simply note that
$\PSL(N,q)$ is the central quotient 
of $\SL(N,q)$, and
the center of $S_n$ is trivial 
for $n\geq 3$. Hence $S_n$ will inject into $\PSL(N,q)$.\qed\\[\baselineskip]

\subsection{The case of the Torelli group}

We now discuss the case of the Torelli subgroup ({\em i.e.,} 
the kernel of the
homomorphism $\Gamma_g \rightarrow \Sp(2g,{\bf Z})$). We will denote the
Torelli group by ${\cal I}_g$. It is shown in \cite{Jo} that ${\cal I}_g$
is finitely generated for $g \geq 3$ and in \cite{Me} that ${\cal I}_2$
is an infinitely generated free group.

\begin{theorem}
\label{torelli}
For each $g\geq 2$, there exists infinitely many $N$ such that
for each such $N$, there exists infinitely
many primes $q$ such that ${\cal I}_g$ surjects $\PSL(N,q)$.\end{theorem}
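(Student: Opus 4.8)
The plan is to reduce Theorem~\ref{torelli} to Theorem~\ref{pslquots} (and its proof) by showing that the restriction of the $\SO(3)$-TQFT representation $\rho_p$ to the Torelli group still has large enough image. Concretely, let $\widetilde{\cal I}_g$ denote the preimage of ${\cal I}_g$ under the central extension $\widetilde{\Gamma}_g \to \Gamma_g$, and set $\Delta'_g = \rho_p(\widetilde{\cal I}_g) \subset \Delta_g \subset {\mathcal G}(\ell)$. The entire machinery of \S\ref{sec4} goes through verbatim for $\Delta'_g$ in place of $\Delta_g$ \emph{provided} one can re-establish the three properties actually used: (i) $\Delta'_g$ is finitely generated; (ii) $\Delta'_g$ is Zariski dense in $\mathcal G$; and (iii) $\Delta'_g$ lies in the $\ell$-points of $\mathcal G$ (so that Corollary~\ref{weis2}, via Remark~\ref{2.8}, applies and yields surjections $\Delta'_g \twoheadrightarrow \SL(N,q)$, hence ${\cal I}_g \twoheadrightarrow \PSL(N,q)$ by Remark~3.3 since $\PSL(N,q)$ has trivial center). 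Finite generation for $g\ge 3$ follows from Johnson's theorem \cite{Jo} (and $\widetilde{\cal I}_g \to {\cal I}_g$ is a central extension by $\BZ$, so $\widetilde{\cal I}_g$ is finitely generated too); the case $g=2$ will need separate treatment.

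\textbf{Zariski density of the Torelli image.} The crux is step (ii). For this I would invoke a strengthening of the Larsen--Wang density theorem: namely that $\rho_p(\widetilde{\cal I}_g)$ is \emph{already} dense in $\SU(N)$ in the analytic topology. In fact \cite{LW} prove density of the image of the \emph{Torelli} group (more precisely, they show the image is dense modulo center, and since $\rho_p$ has determinant $1$ and the central kernel element acts by a root of unity, one gets density in $\SU(N)$ by the same finite-covering argument as in Corollary~\ref{SUNdense}). Granting analytic density of $\Delta'_g$ in $\SU(N) = {\mathcal G}(\bf R)$, Lemma~\ref{chevally} gives real Zariski density in ${\mathcal G}(\bf R)$, and then Lemma~\ref{biggerZD} (applied with $D = \Delta'_g \subset {\mathcal G}(k)$) gives Zariski density of $\Delta'_g$ in $\mathcal G$, exactly as in the proof of Corollary~\ref{allzariski}. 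For step (iii): once $\Delta'_g$ is Zariski dense, Proposition~\ref{41} applies to it \emph{mutatis mutandis} --- but note the proof of Proposition~\ref{41} used that $\Delta_g$ is perfect, and the Torelli group is \emph{not} perfect (e.g. $H_1({\cal I}_g)$ is the Johnson homomorphism image, nontrivial). So here one instead argues directly: $\Delta'_g \subset \Delta_g \subset {\mathcal G}(\ell)$ by Proposition~\ref{41} itself, since the adjoint trace field $\ell' = {\bf Q}(\tr \Ad\,\gamma : \gamma \in \Delta'_g)$ is a subfield of $\ell$, and $\mathcal G$ is already defined over $\ell$. One then runs Corollary~\ref{weis2} (via Remark~\ref{2.8}) with the field $\ell$ and the subgroup $\Delta'_g \subset {\mathcal G}(\ell)$; the only point to check is that the adjoint trace field hypothesis is met, i.e. $\ell' = \ell$, or --- invoking Remark~\ref{2.8} once more --- it suffices that $\mathcal G$ be definable over $\ell'$ and $\Delta'_g \subset {\mathcal G}(\ell')$, which one gets from Vinberg's theorem applied to the Zariski-dense subgroup $\Delta'_g$ together with a cocycle argument. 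Since $\Delta'_g$ is not perfect the cocycle argument is more delicate; but one can sidestep it: any rational prime splitting completely in $\ell$ also splits completely in $\ell'\subset\ell$, and one may as well just apply Corollary~\ref{weis2} over $\ell'$, where $\mathcal G$ is defined by Vinberg and $\Delta'_g \subset {\mathcal G}(\ell')$ by Vinberg together with the Borel--Tits lifting --- wait, that lifting step is exactly where perfectness entered. The clean fix is: use the \emph{adjoint} group $\Gad$. Corollary~\ref{weis2} for $\Gad$ over $\ell'$ gives surjections $\Ad\,\Delta'_g \twoheadrightarrow \Gad({\bf F}_{\cal P}) \cong \PSL(N,q)$ directly (for suitable $\cal P$ of prime norm $q$), which is all we need.

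\textbf{The genus $2$ case and the main obstacle.} For $g=2$, $\Gamma_2$ is perfect and $\widetilde{\Gamma}_2$ can still be taken perfect, so Proposition~\ref{41} and the whole argument of Theorem~\ref{pslquots} apply to $\Delta_g$; the issue is only that ${\cal I}_2$ is infinitely generated \cite{Me}, so Corollary~\ref{weis2} cannot be applied to $\Delta'_2$ directly. However, ${\cal I}_2$ contains finitely generated subgroups whose $\rho_p$-image is still analytically dense in $\SU(N)$ --- indeed, density is a closed condition, so some finitely generated subgroup $J < \widetilde{\cal I}_2$ already has dense image --- and one applies the argument to $\rho_p(J)$, obtaining surjections $J \twoheadrightarrow \SL(N,q)$, hence ${\cal I}_2 \twoheadrightarrow \PSL(N,q)$ by Remark~3.3. \emph{The main obstacle I anticipate} is verifying that Larsen--Wang density holds for the Torelli subgroup at the specific root of unity and normalization used here, rather than for the full mapping class group; this is really a matter of quoting \cite{LW} correctly (their density statement is, in fact, proved for the Torelli group, which is why it is so much stronger than naive irreducibility), but one must be careful that the projective ambiguity and the choice of central extension are handled exactly as in \S\ref{sec3}. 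Once that density input is in hand, everything else is a routine re-run of \S\ref{sec4} with the adjoint group substituted wherever perfectness was previously invoked.
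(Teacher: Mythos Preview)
Your approach is workable in outline but far more laborious than the paper's, and as written it contains a genuine gap.

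The paper's proof is a three-line normality argument that never revisits the TQFT machinery. For $g=2$, Mess's theorem says ${\cal I}_2$ is a (infinitely generated) free group, so it surjects onto every group whatsoever; done. For $g\geq 3$, take any surjection $f:\Gamma_g \twoheadrightarrow \PSL(N,q)$ already produced by Theorem~\ref{pslquots}. Since ${\cal I}_g \lhd \Gamma_g$, the image $f({\cal I}_g)$ is normal in the simple group $\PSL(N,q)$, hence is either trivial or all of $\PSL(N,q)$. If it were trivial for arbitrarily large $N$, then $f$ would factor through $\Gamma_g/{\cal I}_g = \Sp(2g,\BZ)$; but $\Sp(2g,\BZ)$ has the Congruence Subgroup Property, so its finite simple quotients are constrained and cannot be $\PSL(N,q)$ once $N$ is large. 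Hence $f({\cal I}_g)=\PSL(N,q)$ for all but finitely many of the $N$ coming from Theorem~\ref{pslquots}. No Zariski density for the Torelli image, no Larsen--Wang for Torelli, no trace-field bookkeeping.

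Your route---rerunning \S\ref{sec3}--\S\ref{sec4} with $\Delta'_g=\rho_p(\widetilde{\cal I}_g)$ in place of $\Delta_g$---hits a real obstacle at exactly the point you try to sidestep perfectness. Your ``clean fix'' of applying Corollary~\ref{weis2} to $\Gad$ does not work: Theorem~\ref{weis} (Weisfeiler's strong approximation) is stated, and holds, for \emph{simply connected} $\mathcal G$, and strong approximation genuinely fails for adjoint groups. So you cannot conclude $\Ad\,\Delta'_g \twoheadrightarrow \Gad({\bf F}_{\cal P})$ from that corollary. A repair inside your framework would have to establish that the adjoint trace field $\ell'$ of $\Delta'_g$ equals $k$ (so that Corollary~\ref{weis2} applies to the simply connected $\mathcal G$ with $D=\Delta'_g\subset {\mathcal G}(k)$ directly); but the \S\ref{adf} argument for $\ell=k$ used the reduced-trace description of ${\mathcal G}(\ell)$, which in turn rested on $\Delta_g\subset{\mathcal G}(\ell)$, i.e.\ on the very cocycle step that needed perfectness. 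This is not unfixable, but it is a nontrivial loose end, on top of the Larsen--Wang-for-Torelli input you already flag as uncertain. The paper's normality/CSP trick bypasses all of it.
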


\noindent{\bf Proof:}~As noted above ${\cal I}_2$ is an infinitely
generated free group and so the result easily holds in this case.
Thus we fix a $g\geq 3$, and 
consider a surjection  
              $ f: \Gamma_g \twoheadrightarrow \PSL(N,q)$
as constructed in Theorem \ref{pslquots}.
Since  ${\cal I}_g$ is normal in $\Gamma_g$, 
the image $f({\cal I}_g)$  
in $\PSL(N,q)$ will also be normal.  The groups
$\PSL(N,q)$ are simple, and so 
$f({\cal I}_g)$ 
is either trivial or $\PSL(N,q)$.

We claim that for $N$ large enough the image must be $\PSL(N,q)$.  For
suppose not, then for 
some arbitrarily large $N$
the image 
$f({\cal I}_g)$
will be trivial, and so the epimorphisms 
$f:\Gamma_g \twoheadrightarrow \PSL(N,q)$ 
will factor through $\Sp(2g,{\bf Z})$. However, as mentioned in \S \ref{sec1},
$\Sp(2g,{\bf Z})$ has the Congruence Subgroup Property and so cannot surject
the groups $\PSL(N,q)$ (for $N$ large).\qed

\subsection{The adjoint trace field}\label{adf}

We 
briefly discuss how to deduce that the adjoint trace field $\ell=
{\bf Q}(\tr(\Ad\,\gamma): \gamma \in \Delta_g)$ is equal to
$k$. (Recall that $k$ is the maximal real subfield of the cyclotomic
field $K=\BQ(\zeta_p)$.) We proceed as follows. 

{}From Lemma~\ref{ellisasubset} and
Proposition~\ref{41}, we have that $\ell$ is a subfield of $k$ so
that $\mathcal G$ can be defined over $\ell$, and $\Delta_g \subset
{\mathcal G}(\ell)$. The group $\mathcal G$, when considered as
defined over $\ell$, is an $\ell$-form of $\SU(N)$. By the
classification of forms of $\SU(N)$ over number fields 
          \cite[\S (2.3.3) and (2.3.4)]{PR},
there is a central simple algebra $A$, with
center $L$ a quadratic field extension of $\ell$, so that
$${\mathcal G}(\ell) = \{ x\in A \,\, \vert\,\, x \,\tau(x) =1, \ \Nrd(x)=1\}~,$$ where $\tau$ is an (anti-)involution of $A$ of the second kind, and $
\Nrd$ is the reduced norm. Therefore for all $\gamma\in\Delta_g\subset
{\mathcal G}(\ell)$, we have $$\Trd(\gamma)\in L~,$$ where $\Trd$ is
the reduced trace.  When we extend scalars from $\ell$ to $k$, our
group $\mathcal G$ viewed as an $\ell$-group becomes $k$-isomorphic to
our original $k$-group $\mathcal G$. Thus $$A\otimes K \simeq M_N(K)$$
(where $N=N_g(p)$ is the dimension of the $K$-vector space
$V_p(\Sigma)$), and the reduced trace $\Trd(\gamma)$ is (strictly by
definition) nothing but the ordinary trace of $\gamma$ viewed as an
element of $M_N(K)$. For $\gamma\in\Delta_g=\rho_p(\widetilde
\Gamma_g)$, this is the same as the trace of $\gamma$ acting on
$V_p(\Sigma)$.

Now recall that the generator of the kernel of the central extension $\widetilde \Gamma_g \rightarrow \Gamma_g$ acts as multiplication by a primitive $p$th root of unity on the vector space $V_p(\Sigma)$. Thus $\Delta_g=\rho_p(\widetilde \Gamma_g)$ contains an element $\gamma$ whose trace on $V_p(\Sigma)$ is $N$ times $\zeta_p$. Since this is the same as $\Trd(\gamma)$, and we know that $\Trd(\gamma)\in L$, it follows that $\zeta_p\in L$, hence $L=K$.  Since $\ell \subset k $ and $[L:\ell]=2$, this shows $\ell=k$.

\section{Comments} \label{sec5}

\noindent{\bf 1.}~As shown in \cite{LR} for example, if $\Gamma$ is
a finitely generated group that contains a non-abelian free group, and
$\Gamma$ is LERF ({\em i.e.,}
all finitely generated subgroups of $\Gamma$ are
closed in the profinite topology on $\Gamma$), then all finite groups
are involved in $\Gamma$. In the context of lattices in semi-simple Lie
groups, it is only in rank 1 that examples of LERF lattices are known,
although large classes of lattices in these rank 1 Lie groups are
known to have a slightly weaker separability property (see for
example \cite{ALR}, \cite{BHW}, and \cite{LR}). In higher rank
the expectation is that lattices will not be LERF, since the
expectation is that the Congruence Subgroup Property should hold for
these higher rank lattices.  As mentioned in \S \ref{sec1}, if the group
$\Gamma$ is an arithmetic lattice that has the Congruence Subgroup
Property, then the 
finite groups that are involved in $\Gamma$ 
are
restricted.

It is an easy fact that $\Gamma_g$ is not LERF (see Appendix A 
of \cite{LM}).\\[\baselineskip]
\noindent{\bf 2.}~Let $F_n$ denote a free group of rank $n$ and $\Out(F_n)$
denote its outer automorphism group.
The family of
groups $\Out(F_n)$, $n\geq 2$
are often studied in comparison to Mapping Class groups.  Typically, a theorem
about Mapping Class groups is reworked in the context of $\Out(F_n)$.
In regards to Theorem \ref{main}, it was already known
from \cite{Gil} that all finite groups are involved in $\Out(F_n)$.
Indeed, for $n\geq 3$, it is shown in \cite{Gil} that $\Out(F_n)$
is residually symmetric 
({\em i.e.,}
given $1\neq\alpha\in \Out(F_n)$ there is a finite
symmetric group $S_m$ and an epimorphism $\theta:\Out(F_n)\rightarrow S_m$
with $\theta(\alpha)\neq 1$).

Another proof that all finite groups are involved in $\Out(F_n)$ can be
deduced from \cite{GL} using methods similar to those used here.

%%%%%%%%%%%%%%%%%%%%   End of main body of article
%
%                             References
%
%   BiBTeX users uncomment the following line:
%
%\bibliographystyle{gtart}
%

\end{document}